\newtheorem{theorem}{Theorem}
\newtheorem{lemma}{Lemma}
\newtheorem{proposition}{Proposition}
\def\Binv{(\bbB^t)^{-1}}
\newtheorem{assumption}{\hspace{0pt}\bf Assumption}
\title{DAve-QN: A Distributed Averaged Quasi-Newton Method
with Local Superlinear Convergence Rate}
\author{
  Saeed Soori \\
  ECE Department\\
  Rutgers University\\
  \texttt{saeed.soori@rutgers.edu} \\
  \And
   Konstantin Mischenko \\
   CS Department\\
   KAUST University  \\
   \texttt{konstantin.mishchenko@kaust.edu.sa} \\
   \And
   Aryan Mokhtari \\
   Laboratory for Information and Decision Systems \\
   Massachusetts Institute of Technology \\
   \texttt{aryanm@mit.edu} \\
   \And
   Maryam Mehri Dehnavi \\
   CS Department \\
   University of Toronto  \\
   \texttt{mmehride@cs.toronto.edu} \\
   \And
   Mert Gurbuzbalaban \\
   MSIS Department \\
   Rutgers University \\
   \texttt{mert.gurbuzbalaban@rutgers.edu} \\
}
\begin{document}

\maketitle

\begin{abstract}
  In this paper, we consider distributed algorithms for solving the empirical risk minimization problem under the master/worker communication model. We develop a distributed asynchronous quasi-Newton algorithm that can achieve superlinear convergence. To our knowledge, this is the first distributed asynchronous algorithm with superlinear convergence guarantees. Our algorithm is communication-efficient in the sense that at every iteration the master node and workers communicate vectors of size $O(p)$, where $p$ is the dimension of the decision variable.  The proposed method is based on a distributed asynchronous averaging scheme of decision vectors and gradients in a way to effectively capture the local Hessian information of the objective function. Our convergence theory supports asynchronous computations subject to both bounded delays and unbounded delays with a bounded time-average. Unlike in the majority of asynchronous optimization literature, we do not require choosing smaller stepsize when delays are huge. We provide numerical experiments that match our theoretical results and showcase significant improvement comparing to state-of-the-art distributed algorithms.
\end{abstract}

\vspace{-0.2in}
\section{Introduction} 
Many optimization problems in machine learning including \textit{empirical risk minimization} are based on processing large amounts of data as an input. Due to the advances in sensing technologies and storage capabilities the size of the data we can collect and store increases at an exponential manner. As a consequence, a single machine (processor) is typically not capable of processing and storing all the samples of a dataset. To solve such ``big data'' problems, we typically rely on distributed architectures where the data is distributed over several machines that reside on a communication network \cite{bertsekas1989parallel,recht2011hogwild}. In such modern architectures, the cost of communication is typically orders of magnitude larger than the cost of floating point operation costs and the gap is increasing \cite{dongarra2014applied}. This requires development of distributed optimization algorithms that can find the right trade-off between the cost of local computations and that of communications.

In this paper, we focus on distributed algorithms for empirical risk minimization problems. The setting is as follows: Given $n$ machines, each machine has access to $m_i$ samples $\{\xi_{i,j}\}_{j=1}^{m_i}$ for $i=1,2,\dots,n$. The samples $\xi_{i,j}$ are random variables supported on a set $\mathcal{P} \subset \reals^d$. Each machine has a loss function that is averaged over the local dataset:
	$$ f_i(\bbx) =  \frac{1}{m_i}\sum_{i=1}^{m_i} \phi(\bbx, \xi_{i,j}) +\frac{\lambda}{2} \| \bbx\|_2^2 $$
where the function $\phi: \reals^p \times \reals^d \to \reals$ is convex in $\bbx$ for each $\xi \in \reals^d$ fixed and $\lambda \geq 0$ is a regularization parameter. The goal is to develop communication-efficient distributed algorithms to minimize the overall empirical loss defined by 
\begin{equation}\label{eq:orig_problem}
   \bbx^* \coloneqq \argmin_{\bbx \in \reals^p}  f(\bbx)
          \coloneqq \argmin_{\bbx \in \reals^p} \frac{1}{n} \sum_{i=1}^n f_i(\bbx).
\end{equation}

The communication model we consider is the \emph{centralized communication} model, also known as the \emph{master/worker} model \cite{xiao2019dscovr}. In this model, the master machine possesses a copy of the global decision variable $\bbx$ which is shared with the worker machines.  Each worker performs local computations based on its local data which is then communicated to the master node to update the decision variable. The way communications are handled can be synchronous or asynchronous, resulting in different type of optimization algorithms and convergence guarantees. The merit of synchronization is that it prevents workers from using obsolete information and, thereby, from submitting a low quality update of parameters to the master. The price to pay, however, is that all the nodes have to wait for the slowest worker, which leads to unnecessary overheads. Asynchronous algorithms do not suffer from this issue, maximizing the efficiency of the workers while minimizing the system overheads. Asynchronous algorithms are particularly preferable over networks with heterogeneous machines with different memory capacities, work overloads, and processing capabilities.

There has been a number of distributed algorithms suggested in the literature to solve the empirical risk minimization problem \eqref{eq:orig_problem} based on primal first-order methods \cite{vanli2016global,gurbuzbalaban2017convergence,dist-adapt-sgd}, their accelerated or  variance-reduced versions \cite{lee17,wai2018sucag,wai2018sucag-journal,asaga,pedregosa2017proxasaga}, lock-free parallel methods \cite{recht2011hogwild,arock}, coordinate descent-based approaches \cite{xiao2019dscovr,takac15dist-sdca,yang2013trading,bianchi2015coordinate}, dual methods \cite{duchi-dual-averaging,yang2013trading}, primal-dual methods \cite{xiao2019dscovr,cocoa,cocoa-plus,bianchi2015coordinate,lazy-iag}, distributed ADMM-like methods \cite{zhang2014asynchronous} as well as quasi-Newton approaches \cite{eisen2017decentralized,wright-dist-qn}, inexact second-order methods \cite{shamir2014dane,reddi2016aide,zhang2015disco,giant2018mahoney,dunner18-trust,gurbuzbalaban2015globally} and general-purpose frameworks for distributed computing environments \cite{cocoa,cocoa-plus} both in the asynchronous and synchronous setting. The efficiency of these algorithms is typically measured by the \emph{communication complexity} which is defined as the equivalent number of vectors in $\mathbb{R}^p$ sent or received across all the machines until the optimization algorithm converges to an $\varepsilon$-neighborhood of the optimum value.  Lower bounds on the communication complexity have been derived in \cite{arjevani2015communication} as well as some linearly convergent algorithms achieving these lower bounds \cite{zhang2015disco,lee17}. However, in an analogy to the lower bounds obtained by \cite{nemirovskii1983problem} for first-order centralized algorithms, the lower bounds for the communication complexity are only effective if the dimension $p$ of the problem is allowed to be larger than the number of iterations. This assumption is perhaps reasonable for very large scale problems where $p$ can be billions, however it is clearly conservative for moderate to large-scale problems where $p$ is not as large. 

\textbf{Contributions:} Most existing state-of-the-art communication-efficient algorithms for strongly convex problems share vectors of size $O(p)$ at every iteration while having linear convergence guarantees. In this work, we propose the first communication-efficient asynchronous optimization algorithm that can achieve superlinear convergence for solving the empirical risk minimization problem under the master/worker communication model. Our algorithm is communication-efficient in the sense that it also shares vectors of size $O(p)$. Our theory supports asynchronous computations subject to both bounded delays and unbounded delays with a bounded time-average. We provide numerical experiments that illustrate our theory and practical performance. The proposed method is based on a distributed asynchronous averaging scheme of decision vectors and gradients in a way to effectively capture the local Hessian information. Our proposed algorithm, Distributed Averaged Quasi-Newton (DAve-QN) 
is inspired by the Incremental Quasi-Newton (IQN) method proposed in \cite{mokhtari2018iqn} which is a deterministic incremental algorithm based on the BFGS method. In contrast to the IQN method which is designed for centralized computation, our proposed scheme can be implemented in asynchronous master/worker distributed settings; allowing better scalability properties with  parallelization, while being robust to delays of the workers as an asynchronous algorithm. 

\textbf{Related work.} Although the setup that we consider in this paper is an asynchronous master/worker distributed setting, it also relates to incremental aggregated algorithms \cite{le2012stochastic,defazio2014saga,defazio2014finito,mairal2015incremental,gurbuzbalaban2017convergence,mokhtari2018surpassing,vanli2018global}, as at each iteration the information corresponding to one of the machines, i.e., functions, is evaluated while the variable is updated by aggregating the most recent information of all the machines. In fact, our method is inspired by an incremental quasi-Newton method proposed in \cite{mokhtari2018iqn} and a delay-tolerant method from~\cite{mishchenko2018delay}.  However, in the IQN method, the update at iteration $t$ is a function of the last $n$ iterates $\{x^{t-1},\dots,x^{t-n} \}$, while in our asynchronous distributed scheme the updates are performed on delayed iterates $\{x^{t-d_1^t-1},\dots,x^{t-d_n^t-n} \}$. This major difference between the updates of these two algorithms requires a challenging different analysis. Further, our algorithm can be considered as an asynchronous distributed variant of traditional quasi-Newton methods that have been heavily studied in the numerical optimization community \cite{goldfarb1970family,broyden1973local,dennis1974characterization,powell1976some}. Also, there have been some works on decentralized variants of quasi-Newton methods for consensus optimization where communications are performed over a fixed arbitrary graph where a master node is impractical or does not exist, this setup is also known as the \emph{multi-agent setting} \cite{nedic2009distributed}. The work in \cite{eisen2017decentralized} introduces a linearly convergent decentralized quasi-Newton method  for decentralized settings. Our setup is different where we have a particular star network topology obeying the master/slave hierarchy. Furthermore, our theoretical results are stronger than those available in the multi-agent setting as we establish a superlinear convergence rate for the proposed method.

\textbf{Outline.} In Section \ref{sec:BFGS_update}, we review the update of the BFGS algorithm that we build on our distributed quasi-Newton algorithm. We formally present our proposed DAve-QN algorithm in Section~\ref{sec:DAve_QN}. We then provide our theoretical convergence results for the proposed DAve-QN method in Section~\ref{sec:convg_analysis}. Numerical results are presented in Section~\ref{sec:exp}. Finally, we give a summary of our results and discuss future work in Section 5.

\vspace{-0.15in}
\section{Algorithm}

\subsection{Preliminaries: The BFGS algorithm}\label{sec:BFGS_update}
The update of the BFGS algorithm for minimizing a convex smooth function $f:\mathbb{R}^p \to \mathbb{R}$ is given by 
\begin{align} \label{eq_descent_update}
\bbx^{t+1} = \bbx^{t} - \eta^t({\bbB^{t+1}})^{-1}\nabla f(\bbx^t),
\end{align}
where $\bbB^{t+1}$ is an estimate of the Hessian $\nabla^2 f(\bbx^t)$ at time $t$ and $\eta^t$ is the stepsize (see e.g. \cite{nocedal2006numerical}). The idea behind the BFGS (and, more generally, behind quasi-Newton) methods is to compute the Hessian approximation $\bbB^{t+1}$ using only first-order information. Like Newton methods, BFGS methods work with stepsize $\eta_t = 1$ when the iterates are close to the optimum. However, at the initial stages of the algorithm, the stepsize is typically determined by a line search for avoiding the method to diverge.

A common rule for the Hessian approximation is to choose it to satisfy the secant condition
	$\bbB^{t+1} \bbs^{t+1} = \bby^{t+1} ,
	$
where $ \bbs^{t+1} = \bbx^{t} - \bbx^{t-1},$ and $  \bby^{t+1}  =  \nabla f(\bbx^{t}) - \nabla f(\bbx^{t-1})$ are called the \emph{variable variation} and \emph{gradient variation} vectors, respectively. The Hessian approximation update of BFGS which satisfies the secant condition 
can be written as a rank-two update
\begin{equation}\label{eq-rank-two-update}
 \bbB^{t+1}=\bbB^{t}+\bbU^{t+1}+\bbV^{t+1}, \quad
   \bbU^{t+1} = \frac{\bby^{t+1} ({\bby^{t+1}})^T}{({\bby^{t+1}})^T \bbs^{t+1}}, \ \ \bbV^{t+1} = - \frac{ \bbB^t \bbs^{t+1} (\bbs^{t+1})^T \bbB^{t+1}}{(\bbs^{t+1})^T \bbB^t \bbs^{t+1}}.
\end{equation} 
Note that both matrices $\bbU^{t}$ and $\bbV^{t}$ are rank-one. Therefore, the update \eqref{eq-rank-two-update} is rank two. Owing to this property, the inverse of the Hessian approximation $\bbB^{t+1}$ can be computed at a low cost of $\mathcal{O}(p^2)$ arithmetic iterations based on the Woodbury-Morrison formula, instead of computing the inverse matrix directly with a complexity of $\mathcal{O}(p^3)$.
 For a strongly convex function $f$ with the global minimum  $\bbx^*$, a classical convergence result for the BFGS method shows that the iterates generated by BFGS are  \emph{superlinearly} convergent \cite{Broyden}, i.e. $ \lim_{t \rightarrow \infty} \frac{ \| \bbx^{t+1} - \bbx^*\|}{ \| \bbx^{t} - \bbx^*\|} = 0
$. There are also limited-memory BFGS (L-BFGS) methods that require less memory ($\mathcal{O}(p)$) at the expense of having a linear (but not superlinear) convergence \cite{nocedal2006numerical}. Our main goal in this paper is to design a BFGS-type method that can solve problem~\eqref{eq:orig_problem} efficiently with superlinear convergence in an asynchronous setting under the master/slave communication model. We introduce our proposed algorithm in the following section.

\vspace{-0.1in}
\subsection{A Distributed Averaged Quasi-Newton Method (Dave-QN)}\label{sec:DAve_QN}

In this section, we introduce a BFGS-type method that can be implemented in a distributed setting (master/slave) without any central coordination between the nodes, i.e., asynchronously. To do so, we consider a setting where $n$ worker nodes (machines) are connected to a master node. Each worker node $i$ has access to a component of the global objective function, i.e., node $i$ has access only to the function $f_i$. The decision variable stored at the master node is denoted by $x^t$ at time $t$. At each moment $t$, $d_i^t$ denotes the delay in communication with the $i$-th worker, i.e., the last exchange with this worker was at time $t - d_i^t$. For convenience, if the last communication was performed exactly at moment $t$, then we set $d_i^t = 0$. In addition, $D_i^t$ denotes the double delay in communication, which relates to the penultimate communication and can be expressed as follows: $D_i^t = d_i^t + d_i^{t - d_i^t - 1} + 1$. Note that the time index $t$ increases if one of the workers performs an update.

Every worker node $i$ has two copies of the decision variable corresponding to the last two communications with the master, i.e. node $i$ possesses $\bbx^{t-d_i^t}$ and 
$\bbz_i^t := \bbx^{t-D_i^t}.$ Since there has been no communication after $t- d_i^t$, we will clearly have \begin{equation} \bbz_i^{t-d_i^t}= \bbz_i^t=\bbx^{t-D_i^t}.
\label{eq-zit-delayed}
\end{equation}
We are interested in designing a distributed version of the BFGS method described in Section \ref{sec:BFGS_update}, where each node at time $t$ has an approximation $\bbB_i^t$ to the local Hessian (Hessian matrix of $f_i$) where 
$\bbB_i^t$ is constructed based on the local delayed decision variables $\bbx^{t-d_i^t}$ and $\bbz_i^t$, and therefore the local Hessian approximation will also be outdated satisfying
\begin{equation} \bbB_i^{t}= \bbB_i^{t-d_i^t}.
\label{eq-hessian-delay}
\end{equation}
An instance of the setting that we consider in this paper is illustrated in Figure \ref{fig:communication}. At time $t$, one of the workers, say $i_t$, finishes its task and sends a group of vectors and scalars (that we will precise later) to the master node, avoiding communication of any $p\times p$ matrices as it is assumed that this would be prohibitively expensive communication-wise. Then, the master node uses this information to update the decision variable $\bbx^t$ using the new information of node $i_t$ and the old information of the remaining workers. After this process, master sends the updated information to node $i_t$.

\begin{figure}[t]
\begin{center}
    \includegraphics[scale=0.85]{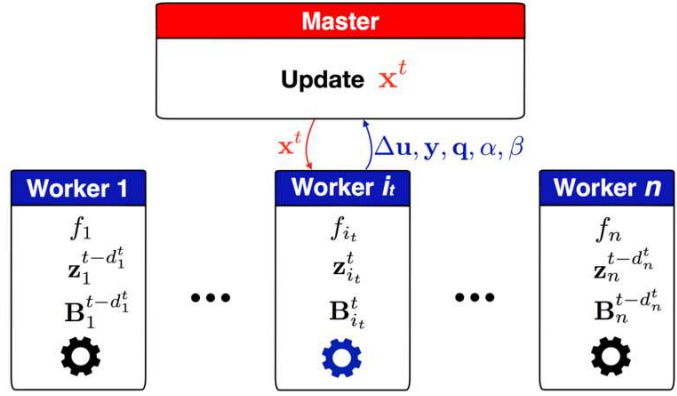}
\end{center}
\caption{Asynchronous communication scheme used by the proposed algorithm.}
\label{fig:communication}
\end{figure}


We define the \emph{aggregate Hessian approximation} as
\begin{equation}\label{eq-hessian-delayed} \bbB^t :=\sum_{i=1}^n \bbB_i^{t}= \sum_{i=1}^n \bbB_i^{t-d_i^t} \end{equation} where we used \eqref{eq-hessian-delay}. In addition, we introduce  \begin{equation}\bbu^t := \sum_{i=1}^n \bbB_i^t \bbz_i^t =  \sum_{i=1}^n \bbB_i^{t-d_i^t} \bbz_i^{t-d_i^t}, \quad \bbg^t := \sum_{i=1}^n \nabla f_i (\bbz_i^t)=\sum_{i=1}^n \nabla f_i (\bbz_i^{t-d_i^t})\label{eq-u-g}
\end{equation} as the \emph{aggregate Hessian-variable product} and  \emph{aggregate gradient} respectively where we made use of the identities \eqref{eq-zit-delayed}--\eqref{eq-hessian-delay}. All these vectors and matrices are only available at the master node since it requires access to the information of all the workers. 

Given that at step $t+1$ only a single index $i_t$ is updated, using the identities \eqref{eq-zit-delayed}--\eqref{eq-u-g}, it follows that the master has the update rules
\begin{align}
\bbB^{t+1} &= \bbB^{t} +  \left( \bbB_{i_t}^{t+1} - \bbB_{i_t}^{t} \right) =  \bbB^{t} +  \left( \bbB_{i_t}^{t} - \bbB_{i_t}^{t-d_i^t} \right), \label{eq:quick_updates_1}\\ 
\bbu^{t+1} &= \bbu^{t} +\left( \bbB_{i_t}^{t+1} \bbz_{i_t}^{t+1} - \bbB_{i_t}^{t}\bbz_{i_t}^{t} \right) = \bbu^{t} +\left( \bbB_{i_t}^{t+1} \bbx^{t-d_{i_t}^t} - \bbB_{i_t}^{t-d_{i_t}^t}\bbx^{t-D_{i_t}^t} \right),\label{eq:quick_updates_2}
 \\ \bbg^{t+1} &= \bbg^{t} +  \left(\nabla f_{i_t} (\bbz_{i_t}^{t+1}) - \nabla f_{i_t}(\bbz_{i_t}^{t}) \right) = \bbg^{t} +  \left(\nabla f_{i_t} (\bbx^{t-d_{i_t}^t}) - \nabla f_{i_t}(\bbx^{t-D_{i_t}^t}) \right). \label{eq:quick_updates_3}
\end{align}
We observe that, only $\bbB_{i_t}^{t+1}$ and $\nabla f_{i_t} (\bbz_{i_t}^{t+1})=\nabla f_{i_t} (\bbx^{t-d_{i_t}^t})$ are required to be computed at step $t + 1$. The former is obtained by the standard BFGS rule applied to $f_i$ carried out by the worker $i_t$:
\begin{align}\label{worker_update}
	\bbB_{i_t}^{t+1} = \bbB_{i_t}^{t} + \frac{\bby_{i_t}^{t+1}(\bby_{i_t}^{t+1})^{\top}}{\alpha^{t+1}} - \frac{\bbq_{i_t}^{t+1}(\bbq_{i_t}^{t+1})^{\top}}{\beta^{t+1}}
\end{align}
with \begin{equation}\label{eq-alpha} \alpha^{t+1} \coloneqq (\bby_{i_t}^{t+1})^{\top}\bbs_{i_t}^{t+1} \quad \bby_{i_t}^{t+1}\coloneqq \bbz_{i_t}^{t+1} - \bbz_{i_t}^{t} = \bbx^{t-d_{i_t}^t} - \bbx^{t-D_{i_t}^t},\end{equation} 
\begin{equation}\label{eq-beta}\bbq_{i_t}^{t+1}\coloneqq \bbB_{i_t}^{t}\bbs_{i_t}^{t+1}, \quad\beta^{t+1}\coloneqq (\bbs_{i_t}^{t+1})^{\top} \bbB_{i_t}^{t}\bbs_{i_t}^{t+1} =  (\bbs_{i_t}^{t+1})^{\top} \bbq_{i_t}^{t+1}.\end{equation}

Then, the master computes the new iterate as
$\bbx^{t+1} =(\bbB^{t+1})^{-1} \left( \bbu^{t+1}- \bbg^{t+1} \right)$
and sends it to worker $i_t$. For the rest of the workers, we update the time counter without changing the variables, so $\bbz_i^{t+1} = \bbz_i^{t}$ and $\bbB_i^{t+1} = \bbB_i^{t}$ for $i\neq i_t$. Although, updating the inverse $\bbB^{t})^{-1}$ 
may seem costly first glance, in fact it can be computed efficiently in ${\cal O}(p^2)$ iterations, similar to standard implementations of the BFGS methods. More specifically, if we introduce a new matrix
\begin{align}\label{eq:incr_bfgs_update_inverse_aux}
\bbU^{t+1} \coloneqq (\bbB^{t})^{-1} -\frac{(\bbB^{t})^{-1} \bby_{i_t}^{t+1}{(\bby_{i_t}^{t+1})^{\top}}(\bbB^{t})^{-1}}{(\bby_{i_t}^{t+1})^{\top} \bbs_{i_t}^{t+1} + ({\bby_{i_t}^{t})^{\top}}(\bbB^{t})^{-1} \bby_{i_t}^{t+1}},
\end{align}
then, by the Sherman-Morrison-Woodbury formula, we have the identity
\begin{align}\label{eq:incr_bfgs_update_inverse}
&(\bbB^{t+1})^{-1}=\bbU^{t+1} +\frac{\ \!\bbU^{t+1} (\bbB_{i_t}^{t-d_{i_t}^t} \bbs_{i_t}^{t+1})(\bbB_{i_t}^{t-d_{i_t}^t} \bbs_{i_t}^{t+1})^T\bbU^{t+1}}{(\bbs_{i_t}^{t+1})^T\bbB_{i_t}^{t-d_{i_t}^t} \bbs_{i_t}^{t+1}\!-\!(\bbB_{i_t}^{t-d_i^t} \bbs_{i_t}^{t+1})^T\bbU^{t+1} (\bbB_{i_t}^{t-d_{i_t}^t} \bbs_{i_t}^{t+1})},
\end{align}
Therefore, if we already have $(\bbB^{t})^{-1}$, it suffices to have only matrix vector products. 
If we denote $\bbv^{t+1} = (\bbB^{t})^{-1}\bby_{i_t}^{t+1}$ and $\bbw^{t+1} \coloneqq \bbU^{t+1} \bbq_{i_t}^{t+1}$, then these equations can be simplified as
\begin{align}\label{eff_perform}
	\bbU^{t+1} &= (\bbB^{t})^{-1} - \frac{\bbv^{t+1}(\bbv^{t+1})^{\top}}{\alpha^{t+1} + (\bbv^{t+1})^{\top}\bby_{i_t}^{t+1}}, \quad 
	\bbv^{t+1} = (\bbB^{t})^{-1}\bby_{i_t}^{t+1},\\
	(\bbB^{t+1})^{-1} &= \bbU^{t+1} + \frac{\bbw^{t+1}(\bbw^{t+1})^{\top}}{\beta^{t+1} - (\bbq^{t+1})^{\top}\bbw^{t+1}}, \quad \bbw^{t+1} \coloneqq \bbU^{t+1} \bbq_{i_t}^{t+1}, \label{eff_perform-2}
\end{align}
\vspace{-0.1in}
\tcbset{width=1.0\columnwidth,before=,after=, colframe=black,colback=white, fonttitle=\bfseries, coltitle=white, colbacktitle=red!80!yellow, boxrule=0.2mm}
\begin{algorithm*}[ht]
\caption{DAve-QN (implementation)}
\label{alg:DQN_1}
\centering
\begin{multicols}{2}
\begin{tcolorbox}[title=Master:]
\begin{small}
\textbf{Initialize} $ \bbx$, $\bbB_i$, $\bbg = \sum_{i=1}^n\nabla f_i(\bbx)$, $\bbB^{-1} = (\sum_{i=1}^n \bbB_i)^{-1}, \bbu = \sum_{i=1}^n \bbB_i \bbx$, \\
\For{$t= 1$ \KwTo $T-1$}{
    \textbf{If} a worker sends an update:\\
    {\color{blue!70!black} Receive $\Delta \bbu$, $\bby$, $\bbq$, $\alpha$, $\beta$ from it}\\
    $\bbu= \bbu + \Delta \bbu$,
    $\bbg = \bbg + \bby$,
     $\bbv = (\bbB)^{-1} \bby$\\
    $\bbU = (\bbB)^{-1} - \frac{\bbv \bbv^\top}{\alpha + \bbv^\top\bby}$\\
    $\bbw = \bbU \bbq$,
    $(\bbB)^{-1} = \bbU + \frac{\bbw \bbw^\top}{\beta - \bbq^\top\bbw}$\\
    $\bbx = (\bbB)^{-1} (\bbu - \bbg)$\\
    {\color{red!80!yellow} Send $\bbx$ to the worker in return}
}
Interrupt all workers\\
\textbf{Output} $ \bbx^T $
\end{small}
\end{tcolorbox}

\columnbreak
\tcbset{width=1.03\columnwidth,before=, colframe=black!50!black, colbacktitle=blue!70!black}
\begin{tcolorbox}[title=Worker $i$:]
\begin{small}
\textbf{Initialize} $\bbx_i = \bbx$, $\bbB_i$\\
\While{not interrupted by master}{
    {\color{red!80!yellow} Receive $\bbx$}\\
    $\bbs_i  = \bbx - \bbz_i$\\	
    $\bby_i = \nabla f_i(\bbx) - \nabla f_i(\bbz_i)$\\
    $\bbq_i = \bbB_i \bbs_i$\\
    $\alpha = \bby_i^{\top} \bbs_i$\\
    $\beta = \bbs_i^\top \bbB_i^{t} \bbs_i$\\
    $\bbu = \bbB_i \bbz_i$\\
    $\bbB_i = \bbB_i + \frac{\bby_i \bby_i^{\top}}{\alpha} - \frac{\bbq_i \bbq_i^{\top}}{\beta}$\\
    $\Delta \bbu = \bbB_i\bbx - \bbu$\\
    $\bbz_i = \bbx$\\
    {\color{blue!70!black} Send $\Delta \bbu, \bby_i, \bbq_i, \alpha, \beta$ to the master}
}
\end{small}
\end{tcolorbox}
\end{multicols}
\end{algorithm*}
where $\alpha^{t+1},\beta^{t+1},\bbq_{i_t}^{t+1}$ and $\bby_{i_t}^{t+1}$ are defined by \eqref{eq-alpha}--\eqref{eq-beta}. 

The steps of the DAve-QN at the master node and the workers are summarized in Algorithm~\ref{alg:DQN_1}. Note that steps at worker $i$ is devoted to performing the update in \eqref{worker_update}. Using the computed matrix $\bbB_i$, node $i$ evaluates the vector $\Delta \bbu$. Then, it sends the vectors $\Delta \bbu$, $\bby_i$, and $\bbq_i$ as well as the scalars $\alpha$ and $\beta$ to the master node. The master node uses the variation vectors $\Delta \bbu$ and $\bby$ to update $\bbu $ and $\bbg$. Then, it performs the update 
$\bbx^{t+1} =(\bbB^{t+1})^{-1} \left( \bbu^{t+1}- \bbg^{t+1} \right)$
by following the efficient procedure presented in \eqref{eff_perform}--\eqref{eff_perform-2}. A more detailed version of Algorithm 1 with exact indices is presented in the supplementary material.

We define \emph{epochs} $\{T_m\}_m$ by setting $T_1=0$ and the following recursion:
\begin{align*} 
T_{m+1}&=\min\{t:\text{ each machine made at least 2 updates on the interval }[T_m, t]\}\\
&= \min\{t: t-D_i^t \geq T_m \text{ for all } i=1,..,M  \}.
\end{align*}

The proof of the following simple lemma is provided in the supplementary material.
\begin{lemma}\label{lemm_update}
	Algorithm \ref{alg:DQN} iterates satisfy 
     $   \bbx^{t} = \left(\frac{1}{n}\sum_{i=1}^n \bbB_i^t \right)^{-1} \left(\frac{1}{n}\sum_{i=1}^n\bbB_i^t \bbz_i^t - \frac{1}{n}\sum_{i=1}^n \nabla f_i(\bbz_i^t) \right)$.
\end{lemma}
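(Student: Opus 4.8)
The plan is to prove the identity by a straightforward induction on $t$, reducing it to the explicit iterate computation performed at the master, $\bbx^{t+1}=(\bbB^{t+1})^{-1}(\bbu^{t+1}-\bbg^{t+1})$, where $\bbB^t,\bbu^t,\bbg^t$ are the aggregate quantities defined in \eqref{eq-hessian-delayed}--\eqref{eq-u-g}. Since $(\tfrac{1}{n}\bbB^t)^{-1}=n(\bbB^t)^{-1}$ and this factor cancels the $\tfrac{1}{n}$ multiplying $\bbu^t-\bbg^t$, the claimed formula is equivalent to $\bbx^t=(\bbB^t)^{-1}(\bbu^t-\bbg^t)$ for all $t\ge 1$. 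Thus it suffices to show (i) that the variables maintained incrementally by the master actually equal the three sums $\sum_{i=1}^n\bbB_i^t$, $\sum_{i=1}^n\bbB_i^t\bbz_i^t$, and $\sum_{i=1}^n\nabla f_i(\bbz_i^t)$, and (ii) that the iterate is formed from them by the displayed rule. In other words, the lemma is essentially a normalized restatement of the master update, and the content of the proof is bookkeeping that the implementation faithfully tracks the conceptual aggregates.

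First I would handle the base case and the invariants. At initialization every worker holds $\bbz_i=\bbx^0$, and the master sets $\bbg=\sum_i\nabla f_i(\bbx^0)$, $\bbu=\sum_i\bbB_i\bbx^0$, $(\bbB)^{-1}=(\sum_i\bbB_i)^{-1}$; with the convention $\bbz_i^0=\bbx^0$, the three invariants hold at $t=0$. For the inductive step, recall that at a transition $t\to t+1$ exactly one index $i_t$ changes its local data: worker $i_t$ receives the current master iterate, which by \eqref{eq-alpha} equals $\bbx^{t-d_{i_t}^t}$, sets $\bbz_{i_t}^{t+1}=\bbx^{t-d_{i_t}^t}$ and updates $\bbB_{i_t}^{t+1}$ via the local BFGS rule \eqref{worker_update} determined by $(\bby_{i_t}^{t+1},\bbq_{i_t}^{t+1},\alpha^{t+1},\beta^{t+1})$ in \eqref{eq-alpha}--\eqref{eq-beta}, while $\bbz_i^{t+1}=\bbz_i^t$ and $\bbB_i^{t+1}=\bbB_i^t$ for $i\ne i_t$. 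The vectors transmitted to the master are precisely the $i_t$-th increments of the aggregates, namely $\Delta\bbu=\bbB_{i_t}^{t+1}\bbz_{i_t}^{t+1}-\bbB_{i_t}^{t}\bbz_{i_t}^{t}$ and the gradient variation $\nabla f_{i_t}(\bbz_{i_t}^{t+1})-\nabla f_{i_t}(\bbz_{i_t}^{t})$, so the master updates \eqref{eq:quick_updates_1}--\eqref{eq:quick_updates_3} add exactly the change of the $i_t$-th summand, and the three invariants propagate from $t$ to $t+1$.

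With the invariants established, the conclusion is immediate: after processing the update at step $t$ the master forms $\bbx^t=(\bbB^t)^{-1}(\bbu^t-\bbg^t)=\big(\sum_{i=1}^n\bbB_i^t\big)^{-1}\big(\sum_{i=1}^n\bbB_i^t\bbz_i^t-\sum_{i=1}^n\nabla f_i(\bbz_i^t)\big)$, which is the stated identity once the cancelling factors $\tfrac{1}{n}$ are reinserted. I do not anticipate a real obstacle, since the argument is purely algebraic bookkeeping; the only points that need attention are correctly tracking the two delayed copies $\bbz_i^t=\bbx^{t-D_i^t}$ and $\bbx^{t-d_i^t}$ when matching the worker's messages to the conceptual increments, and observing that each $\bbB_i^t$ — and hence $\sum_{i=1}^n\bbB_i^t$ — remains positive definite (the curvature scalar $\alpha^{t+1}=(\bby_{i_t}^{t+1})^{\top}\bbs_{i_t}^{t+1}$ is positive by strong convexity of $f_{i_t}$), so that the inverse appearing in the statement is well defined.
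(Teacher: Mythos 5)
Your proof is correct and follows essentially the same route as the paper's: the paper's own argument is exactly the observation that the master's incrementally maintained quantities $\bbu^t$, $\bbg^t$ (and $\bbB^t$) coincide with the aggregate sums $\sum_i\bbB_i^t\bbz_i^t$, $\sum_i\nabla f_i(\bbz_i^t)$, $\sum_i\bbB_i^t$ because only the $i_t$-th summand changes at each step and the worker transmits precisely that increment, after which the master's update rule gives the stated identity. Your additional remarks on the base case and on positive definiteness of $\bbB_i^t$ are harmless elaborations of the same bookkeeping.
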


The result in Lemma \ref{lemm_update} shows that explicit relationship between the updated variable $\bbx^t$ based on the proposed DAve-QN and the local information at the workers. We will use this update to analyze DAve-QN.

\begin{proposition}[Epochs' properties]\label{pr:delays_recurrence}
	The following relations between epochs and delays hold:
    \begin{itemize}
    	\item For any $t\in [T_{m+1}, T_{m+2})$ and any $i=1,2,\dots,n$ one has $t - D_i^t\in [T_m, t)$.
        \item If delays are uniformly bounded, i.e.\ there exists a constant $d$ such that $d_i^t\le d$ for all $i$ and $t$, then for all $m$ we have $T_{m + 1} - T_m \le D\coloneqq 2d+1$ and $T_m\le Dm$.
        \item If we define average delays as $\overline {d^t}\coloneqq \tfrac{1}{n}\sum_{i=1}^n d_i^t$, then $\overline{d^t}\ge (n - 1) / 2$. Moreover, assuming that $\overline{d^t} \le (n- 1)/2 + \overline d$ for all $t$, we get $T_m\le 4n (\overline d + 1)m$.
    \end{itemize}
\end{proposition}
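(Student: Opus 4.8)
The plan is to track, for each worker $i$, two elementary quantities: the time $t-d_i^t$ of its most recent communication with the master and the time $t-D_i^t$ of its penultimate one. I would first verify that both are non-decreasing step functions of $t$. When $i$ is not the worker updated at step $t\to t+1$ its communication times are unchanged while the clock advances, so $D_i^{t+1}=D_i^t+1$ and $(t+1)-D_i^{t+1}=t-D_i^t$; when $i$ is updated, its old last-communication time $t-d_i^t$ becomes the new penultimate one (this is exactly $\bbz_{i}^{t+1}=\bbx^{t-d_i^t}$ from \eqref{eq-alpha}), so $(t+1)-D_i^{t+1}=t-d_i^t\ge t-D_i^t$, using $d_i^t\le D_i^t$, which holds since $D_i^t=d_i^t+d_i^{t-d_i^t-1}+1\ge d_i^t+1$. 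The first bullet is then immediate: $D_i^t\ge 1$ gives $t-D_i^t<t$, and since $T_{m+1}$ belongs to the set defining it we have $T_{m+1}-D_i^{T_{m+1}}\ge T_m$ for all $i$, so monotonicity gives $t-D_i^t\ge T_{m+1}-D_i^{T_{m+1}}\ge T_m$ for every $t\ge T_{m+1}$.

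For the second bullet, $d_i^t\le d$ forces $D_i^t=d_i^t+d_i^{t-d_i^t-1}+1\le 2d+1=D$ for all $i,t$, so $t=T_m+D$ already satisfies $t-D_i^t\ge T_m$ for every $i$; minimality of $T_{m+1}$ gives $T_{m+1}\le T_m+D$, and iterating from $T_1=0$ gives $T_m\le (m-1)D\le Dm$.

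For the third bullet I would use that the $n$ most-recent-communication times $\{t-d_i^t\}_{i=1}^n$ are pairwise distinct, since exactly one worker is updated per clock tick, so no two workers can share a last-communication time (once every worker has communicated at least once). Hence $d_1^t,\dots,d_n^t$ are $n$ distinct nonnegative integers, their sum is at least $0+1+\cdots+(n-1)=n(n-1)/2$, and therefore $\overline{d^t}\ge (n-1)/2$. Under the extra hypothesis $\overline{d^t}\le (n-1)/2+\overline d$ for all $t$, i.e.\ $\sum_i d_i^t\le n(n-1)/2+n\overline d$, ordering the delays as $\delta_1<\cdots<\delta_n$ gives $\delta_j\ge j-1$ and $\sum_j(\delta_j-(j-1))\le n\overline d$; since each term is nonnegative, $\max_i d_i^t=\delta_n\le (n-1)+n\overline d=:L$. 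Then $D_i^t\le 2L+1$ for all $i,t$, and the argument of the second bullet applies with $D$ replaced by $2L+1$, yielding $T_{m+1}-T_m\le 2L+1$ and hence $T_m\le (2L+1)m=(2n-1+2n\overline d)m\le 4n(\overline d+1)m$.

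I expect the main obstacle to be the third bullet's passage from an \emph{average}-delay bound to a \emph{maximum}-delay bound: it relies on the distinctness of the $n$ last-communication times, which is precisely what forces the ordered delays to be spread out so that $\sum_j(\delta_j-(j-1))$, rather than just $\sum_j\delta_j$, is the controlled quantity. The only other point needing care is the start-up phase: before every worker has communicated once the times $t-d_i^t$ need not be distinct, so the average- and maximum-delay bounds should be read as holding after warm-up (consistent with taking $T_1=0$ to already obey the epoch recursion), after which all the estimates above go through as stated.
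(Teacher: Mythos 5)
The paper states Proposition~\ref{pr:delays_recurrence} without supplying a proof in the supplementary material (the construction is borrowed from the delay-tolerant framework of \cite{mishchenko2018delay}), so there is nothing to compare against line by line; judged on its own, your argument is correct and complete. The three ingredients you isolate are exactly the ones needed: (i) monotonicity of $t\mapsto t-D_i^t$, combined with the fact that $T_{m+1}$ itself satisfies the defining condition $T_{m+1}-D_i^{T_{m+1}}\ge T_m$, gives the first bullet; (ii) the pointwise bound $D_i^t\le 2d+1$ makes $t=T_m+D$ an admissible candidate in the minimum defining $T_{m+1}$, giving the second; and (iii) the distinctness of the last-communication times $\{t-d_i^t\}_i$ (one update per clock tick) is precisely what converts the average-delay hypothesis into the maximum-delay bound $\max_i d_i^t\le (n-1)+n\overline d$, after which the second bullet's argument applies verbatim; your constants check out, since $2(n-1+n\overline d)+1\le 4n(\overline d+1)$. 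Your caveat about the warm-up phase is also well taken: at $t=0$ all delays vanish, so $\overline{d^0}=0<(n-1)/2$, and the third bullet can only hold once every worker has communicated at least once — a point the paper glosses over entirely. The one thing I would make explicit is that the minimum defining $T_{m+1}$ is attained (the admissible set is a nonempty set of integers bounded below, nonempty because every worker communicates infinitely often), since your first bullet quietly uses $T_{m+1}-D_i^{T_{m+1}}\ge T_m$.
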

Clearly, without visiting every function we can not converge to $\bbx^*$. Therefore, it is more convenient to  measure performance in terms of number of passed epochs, which can be considered as our alternative counter for time. Proposition \ref{pr:delays_recurrence} explains how one can get back to the iterations time counter assuming that delays are bounded uniformly or on average. However, uniform upper bounds are rather pessimistic which motivates the convergence in epochs that we consider. 
\vspace{-0.15in}
\section{Convergence Analysis}\label{sec:convg_analysis}
\vspace{-0.1in}
In this section, we study the convergence properties of the proposed distributed asynchronous quasi-Newton method. To do so, we first assume that the following conditions are satisfied. 

\begin{assumption} \label{ass:gradient_assumption}
The component functions $f_i$ are $L$-smooth and $\mu$-strongly convex, i.e., there exist positive constants $0 < \mu \leq L$ such that, for all $i$ and $\bbx, \hbx\in \reals^p$
\begin{equation}\label{eq:gradient_assumption}
\mu \| \bbx - \hbx \|^2 \leq(\nabla f_i (\bbx) - \nabla f_i(\hbx) )^T(\bbx - \hbx) \leq L \| \bbx - \hbx \|^2.
\end{equation}
\end{assumption}
%
\begin{assumption} \label{ass:hessian_assumption}
The Hessians $ \nabla^2 f_i$ are Lipschitz contunuous, i.e., there exists a positive constant $\tilde{L}$ such that, for all $i$ and $\bbx, \hbx\in \reals^p$, we can write
%
 $\| \nabla^2 f_i (\bbx) - \nabla^2 f_i(\hbx) \| \leq \tilde{L} \| \bbx - \hbx \|$.
%
\end{assumption}
\vspace{-0.05in}
It is well-known and widely used in the literature on Newton's and quasi-Newton methods \cite{nesterov2013introductory,Broyden,Powell,Dennis} that if the function $f_i$ has Lipschitz continuous Hessian $x\mapsto\nabla^2 f_i(x)$ with parameter $\tilde{L}$ then
\begin{equation}\label{eq:result_of_lip_hessian}
\left\| \nabla^2 f_i(\tbx) (\bbx-\hbx) -(\nabla f_i(\bbx)-\nabla f_i(\hbx))\right\|
\leq {\tilde{L}} \|\bbx-\hbx\| \max\left\{\|\bbx-\tbx\|,\|\hbx-\tbx\|\right\},
\end{equation}
for any arbitrary $\bbx,\tbx,\hbx\in \reals^p$. See, for instance, Lemma~3.1 in \cite{Broyden}.


\begin{lemma} \label{lem:M_2}
Consider the Dave-QN algorithm summarized in Algorithm \ref{alg:DQN}.
For any $i$, define the residual sequence for function $f_i$ as $\sigma_i^t:= \max\{\|\bbz_i^{t}-\bbx^*\|,\|\bbz_i^{t-D_i^t}-\bbx^*\|\}$ and set $\bbM_i=\nabla^2 f_i(\bbx^*)^{-1/2}$. If Assumptions \ref{ass:gradient_assumption} and \ref{ass:hessian_assumption} hold and the condition $  \sigma_i^t<\mu/(3\tilde{L})$ is satisfied then a Hessian approximation matrix $\bbB_i^t$ and its last updated version $\bbB_i^{t - D_i^t} $ satisfy
\begin{align}\label{eq:lemma_M_2_claim}
\begin{small}
\left\|\bbB_i^{t}\!-\! \nabla^2 f_i(\bbx^*)\right\|_{\bbM_i}
\leq \!\left[ \left[1\!-\! \alpha {\theta_i^{t - D_i^t}}^2\right]^{\frac{1}{2}} 
	  \!\!\!+\alpha_3 \sigma_i^{t - D_i^t} \right] \!\! \left\|\bbB_i^{t - D_i^t}- \nabla^2 f_i(\bbx^*)\right\|_{\bbM_i} \!\!+\! \alpha_4 \sigma_i^{t - D_i^t},
\end{small}	  
\end{align}
where $\alpha,\alpha_3 $, and $\alpha_4$ are some positive constants and 
$\theta_i^{t - D_i^t} =\frac{\|\bbM_i(\bbB_i^{t - D_i^t}- \nabla^2 f_i(\bbx^*))\bbs_i^{t - D_i^t}\|}{\|\bbB_i^{t - D_i^t}- \nabla^2 f_i(\bbx^*)\|_{\bbM_i}\|\bbM_i^{-1}\bbs_i^{t - D_i^t}\|}$
with the convention that $\theta_i^{t - D_i^t}=0$ in the special case $\bbB_i^{t - D_i^t} =  \nabla^2 f_i(\bbx^*)$.
\end{lemma}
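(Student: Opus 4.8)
The plan is to reduce the assertion to the classical bounded--deterioration estimate for one BFGS step in a weighted Frobenius norm, exactly as in the analysis of BFGS and of its incremental variant IQN \cite{mokhtari2018iqn,Broyden}; the only genuinely new work is bookkeeping the delays so that this estimate applies to the outdated matrices $\bbB_i^t=\bbB_i^{t-d_i^t}$ and $\bbB_i^{t-D_i^t}$. First I would unwind the indices: by the worker update \eqref{worker_update}, the identities \eqref{eq-zit-delayed}--\eqref{eq-hessian-delay}, and Proposition \ref{pr:delays_recurrence}, the matrix $\bbB_i^t$ is obtained from $\bbB_i^{t-D_i^t}$ by a single BFGS update whose variable variation is $\bbs:=\bbs_i^{t-D_i^t}$ and whose gradient variation is $\bby:=\nabla f_i(\bba)-\nabla f_i(\bbb)$, where $\bba,\bbb$ are the two consecutive iterates stored by worker $i$ at the corresponding step; by construction both satisfy $\max\{\|\bba-\bbx^*\|,\|\bbb-\bbx^*\|\}\le\sigma_i^{t-D_i^t}$. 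Strong convexity \eqref{eq:gradient_assumption} gives $\mu\|\bbs\|^2\le\bby^{\top}\bbs\le L\|\bbs\|^2$, so the BFGS step is well defined.

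Second I would measure the secant error of $\nabla^2 f_i(\bbx^*)$ against the pair $(\bbs,\bby)$. Applying \eqref{eq:result_of_lip_hessian} with $\tbx=\bbx^*$ and the endpoints $\bba,\bbb$ yields $\|\bby-\nabla^2 f_i(\bbx^*)\bbs\|\le\tilde L\,\|\bbs\|\,\sigma_i^{t-D_i^t}$; equivalently, writing $\bby=\bar\bbG\bbs$ with $\bar\bbG:=\int_0^1\nabla^2 f_i\bigl(\bbb+\tau(\bba-\bbb)\bigr)\,d\tau$, Lipschitzness of the Hessian along the segment (Assumption \ref{ass:hessian_assumption}) gives $\|\bar\bbG-\nabla^2 f_i(\bbx^*)\|\le\tilde L\,\sigma_i^{t-D_i^t}$. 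It is here that the hypothesis $\sigma_i^{t-D_i^t}<\mu/(3\tilde L)$ is used: it forces $\bar\bbG\succeq(\mu-\tilde L\sigma_i^{t-D_i^t})\bbI\succ\tfrac23\mu\,\bbI$, so $\bar\bbG$ (hence also $\nabla^2 f_i(\bbx^*)$ and $\bbM_i$) has condition number bounded by a function of $\mu,L,\tilde L$ only, which is what makes the constants $\alpha,\alpha_3,\alpha_4$ below uniform in $i$ and $t$.

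Third I would invoke the Broyden--Dennis--Mor\'e estimate for the BFGS update in the norm $\|\cdot\|_{\bbM_i}$ with $\bbM_i=\nabla^2 f_i(\bbx^*)^{-1/2}$ \cite{Broyden,Dennis}. In its exact-secant form this estimate is a genuine contraction of $\|\bbB_i^{t-D_i^t}-\nabla^2 f_i(\bbx^*)\|_{\bbM_i}$ by the factor $\bigl(1-\alpha\,{\theta_i^{t-D_i^t}}^2\bigr)^{1/2}$, where $\theta_i^{t-D_i^t}$ is exactly the cosine-type quantity in the statement and $\alpha>0$ depends only on the condition number controlled in the previous step. Re-running that argument with the actual $\bby$ in place of $\nabla^2 f_i(\bbx^*)\bbs$, the secant residual bounded by $\tilde L\|\bbs\|\sigma_i^{t-D_i^t}$ enters through the standard perturbation terms of the lemma and produces (a) a multiplicative correction of size $\alpha_3\sigma_i^{t-D_i^t}$, coming from $\|\bbM_i(\bby-\nabla^2 f_i(\bbx^*)\bbs)\|/\|\bbM_i^{-1}\bbs\|\le\|\bbM_i\|\,\|\bbM_i^{-1}\|\,\tilde L\,\sigma_i^{t-D_i^t}$, and (b) an additive term of size $\alpha_4\sigma_i^{t-D_i^t}$. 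Collecting these gives precisely \eqref{eq:lemma_M_2_claim}; the convention $\theta_i^{t-D_i^t}=0$ handles the degenerate case $\bbB_i^{t-D_i^t}=\nabla^2 f_i(\bbx^*)$, where the inequality reduces to $\|\bbB_i^t-\nabla^2 f_i(\bbx^*)\|_{\bbM_i}\le\alpha_4\sigma_i^{t-D_i^t}$ and follows directly from the secant bound of step two.

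I expect the main obstacle to be the weighted-Frobenius BFGS estimate itself, rather than the delay bookkeeping. Obtaining the improved factor $\bigl(1-\alpha\theta^2\bigr)^{1/2}$ --- as opposed to the crude $1+O(\sigma)$ of plain bounded deterioration --- requires the Broyden--Dennis--Mor\'e manipulation of the rank-two BFGS update after congruence transformation by $\bbM_i$ (the ``$\cos$/$\tan$'' identity), and then one must carry the non-zero secant residual through that computation while keeping $\alpha,\alpha_3,\alpha_4$ expressed in terms of $\mu,L,\tilde L$ alone; the smallness condition on $\sigma_i^{t-D_i^t}$ is exactly what keeps the various condition numbers and the curvature $\bby^{\top}\bbs$ bounded away from degeneracy throughout. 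The delay identities only enter in step one, to certify that we are indeed looking at a single BFGS step between $\bbB_i^{t-D_i^t}$ and $\bbB_i^t$.
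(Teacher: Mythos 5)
Your proposal is correct and follows essentially the same route as the paper: the paper's proof is exactly a reduction to the Broyden--Dennis--Mor\'e weighted-Frobenius bounded-deterioration estimate (Lemma 5.2 of \cite{Broyden}, stated in the appendix as an intermediate lemma for the delayed matrices $\bbB_i^{t-D_i^t}\to\bbB_i^t$), with the hypothesis $\|\bbM\bby-\bbM^{-1}\bbs\|\le\beta\|\bbM^{-1}\bbs\|$, $\beta\le 1/3$, verified via strong convexity and the Lipschitz-Hessian bound $\|\bby-\nabla^2 f_i(\bbx^*)\bbs\|\le\tilde L\|\bbs\|\sigma_i$, which is precisely where the condition $\sigma_i<\mu/(3\tilde L)$ enters. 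The only cosmetic difference is that the paper cites the BDM lemma (with its explicit constants $\alpha=(1-2\beta)/(1-\beta^2)$, $\alpha_3=5\tilde L/(2\mu(1-\beta))$, $\alpha_4=2(1+2\sqrt p)\tilde L\|\nabla^2 f_i(\bbx^*)^{-1/2}\|_\bbF/\sqrt\mu$) rather than re-deriving it as you contemplate.
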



Lemma~\ref{lem:M_2} shows that, if we neglect the additive term $ \alpha_4 \sigma_i^{t - D_i^t}$ in \eqref{eq:lemma_M_2_claim}, the difference between the Hessian approximation matrix $\bbB_i^{t}$ for the function $f_i$ and its corresponding Hessian at the optimal point $ \nabla^2 f_i(\bbx^*)$ decreases by following the update of Algorithm~\ref{alg:DQN}. To formalize this claim and show that the additive term is negligible, we prove in the following lemma that the sequence of errors $\|\bbx^t-\bbx^*\|$ converges to zero R-linearly which also implies linear convergence of the sequence $\sigma_i^t$.

\begin{lemma}\label{lemma:lin_convergence}
Consider the Dave-QN method outlined in Algorithm \ref{alg:DQN}. Further assume that the conditions in Assumptions~\ref{ass:gradient_assumption} and \ref{ass:hessian_assumption} are satisfied. Then, for any $r\in(0,1)$ there exist positive constants $\eps(r)$ and $\delta(r)$ such that if $\|\bbx^0-\bbx^*\|< \eps(r)$ and $\|\bbB_i^0-\nabla^2 f_i(\bbx^*)\|_\bbM< \delta(r)$ for $\bbM=\nabla^2 f_i(\bbx^*)^{-1/2}$ and $i=1,2,\dots,n$, the sequence of iterates generated by DAve-QN satisfy
$\|\bbx^{t}-\bbx^*\|\leq r^{m} \|\bbx^0-\bbx^*\|$
for all $t\in [T_{m}, T_{m+1})$.
\end{lemma}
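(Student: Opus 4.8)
The plan is to run a coupled induction over epochs on the two quantities $e_m := \max_{t \in [T_m, T_{m+1})} \|\bbx^t - \bbx^*\|$ and $\Delta_m := \max_i \max_{t \in [T_m, T_{m+1})} \|\bbB_i^t - \nabla^2 f_i(\bbx^*)\|_{\bbM_i}$, showing simultaneously that $\Delta_m$ stays below a small absolute constant (so that the hypothesis $\sigma_i^t < \mu/(3\tilde L)$ of Lemma \ref{lem:M_2} is maintained) and that $e_{m+1} \le r\, e_m$. The starting point is the exact update identity from Lemma \ref{lemm_update}, $\bbx^t = (\tfrac1n\sum_i \bbB_i^t)^{-1}(\tfrac1n\sum_i \bbB_i^t \bbz_i^t - \tfrac1n\sum_i \nabla f_i(\bbz_i^t))$. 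Subtracting $\bbx^*$ and using $\sum_i \nabla f_i(\bbx^*) = 0$, I would rewrite the numerator as $\tfrac1n\sum_i \big(\bbB_i^t - \nabla^2 f_i(\bbx^*)\big)(\bbz_i^t - \bbx^*) + \tfrac1n\sum_i \big(\nabla^2 f_i(\bbx^*)(\bbz_i^t - \bbx^*) - (\nabla f_i(\bbz_i^t) - \nabla f_i(\bbx^*))\big)$. The second sum is controlled by the Lipschitz-Hessian bound \eqref{eq:result_of_lip_hessian}, giving a term of order $\tilde L \,\|\bbz_i^t - \bbx^*\|^2$; the first sum is of order $\Delta_{\cdot}\,\|\bbz_i^t - \bbx^*\|$. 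Meanwhile $\|(\tfrac1n\sum_i \bbB_i^t)^{-1}\|$ is bounded as long as the $\bbB_i^t$ stay close to the strongly convex Hessians $\nabla^2 f_i(\bbx^*) \succeq \mu I$, which is guaranteed by $\Delta_m$ being small.

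The key structural fact is that $\bbz_i^t = \bbx^{t - D_i^t}$, and by Proposition \ref{pr:delays_recurrence} if $t \in [T_{m+1}, T_{m+2})$ then $t - D_i^t \in [T_m, t)$, so every $\|\bbz_i^t - \bbx^*\|$ appearing on the right-hand side is at most $\max(e_m, e_{m+1})$. This yields a recursion of the shape
\begin{equation*}
e_{m+1} \le C_1\big(\Delta_{m} + \Delta_{m+1} + \tilde L\, \max(e_m, e_{m+1})\big)\,\max(e_m, e_{m+1}),
\end{equation*}
valid once the smallness conditions hold; absorbing the $e_{m+1}$ on the right (it is multiplied by a small factor) gives $e_{m+1} \le C\,(\Delta_m + \tilde L e_m)\,e_m$. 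In parallel, Lemma \ref{lem:M_2} with $\theta_i^{\cdot} \le 1$ gives $\|\bbB_i^t - \nabla^2 f_i(\bbx^*)\|_{\bbM_i} \le \|\bbB_i^{t - D_i^t} - \nabla^2 f_i(\bbx^*)\|_{\bbM_i} + (\alpha_3 \sigma_i^{t-D_i^t})\|\bbB_i^{t-D_i^t} - \cdots\|_{\bbM_i} + \alpha_4 \sigma_i^{t - D_i^t}$, so using $\sigma_i^{t - D_i^t} \le \max(e_{m-1}, e_m)$ (again via Proposition \ref{pr:delays_recurrence}) one obtains a bound of the form $\Delta_{m+1} \le \Delta_m + C_2 e_{m-1}(\Delta_m + 1)$; iterating over the geometrically decaying $e_k$ keeps $\Delta_m$ bounded by $\Delta_0 + C_3 e_0 \le 2\delta(r)$ say.

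Concretely, given $r \in (0,1)$, I would choose $\delta(r)$ small enough that $C\delta(r) \le r/2$ and then $\eps(r)$ small enough that $C\tilde L \eps(r) \le r/2$ and also small enough that the accumulated drift $C_3 e_0$ keeps all $\sigma_i^t$ below $\mu/(3\tilde L)$ and all $\bbB_i^t$ well-conditioned throughout; then the induction closes with $e_{m+1} \le (C\delta(r) + C\tilde L e_m)e_m \le r\, e_m$, which telescopes to $\|\bbx^t - \bbx^*\| \le e_m \le r^m e_0 = r^m\|\bbx^0 - \bbx^*\|$ for $t \in [T_m, T_{m+1})$, recalling $T_1 = 0$ so $e_0 \ge \|\bbx^0 - \bbx^*\|$ with $m=0$ as the base case. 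The main obstacle is the bookkeeping of the mutual dependence: $e_{m+1}$ depends on $\Delta_{m+1}$ which depends on $e_m$ and $\Delta_m$, and one must set up the induction hypothesis to carry \emph{both} "$e_k \le r^k e_0$ for $k \le m$" and "$\Delta_k \le 2\delta(r)$ for $k \le m+1$" so the implications don't circle; handling the $\max(e_m, e_{m+1})$ terms (which requires showing the new iterate doesn't jump up before coming down) and verifying that the delay/epoch structure really does push every delayed index back into the previous epoch are the places where care is needed.
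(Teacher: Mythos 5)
Your proposal is correct and follows essentially the same route as the paper: a double induction (over epochs $m$ and over $t$ within each epoch) carrying both the linear rate and the bound $\|\bbB_i^t-\nabla^2 f_i(\bbx^*)\|_{\bbM}\le 2\delta$, with the error recursion coming from the same decomposition of the update in Lemma~\ref{lemm_update} (which the paper imports as Lemma~\ref{lem:recurrence_to_average} from the IQN analysis), the drift of $\bbB_i^t$ controlled by Lemma~\ref{lem:M_2} summed geometrically over epochs, delayed indices pushed back an epoch via Proposition~\ref{pr:delays_recurrence}, and a Banach-lemma bound on $\|(\tfrac1n\sum_i\bbB_i^t)^{-1}\|$. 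The only cosmetic difference is that you re-derive the key recurrence inequality rather than citing it.
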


The result in Lemma \ref{lemma:lin_convergence} shows that the error for the sequence of iterates generated by the Dave-QN method converge to zero at least linearly in a neighborhood of the optimal solution. Using this result, in the following theorem we prove our main result, which shows a specific form of superlinear convergence.

\begin{theorem}\label{thm:sup_linear_thm}
Consider the proposed method outlined in Algorithm \ref{alg:DQN}. Suppose that Assumptions~\ref{ass:gradient_assumption} and \ref{ass:hessian_assumption} hold. Further, assume that the required conditions for the results in Lemma~\ref{lem:M_2} and Lemma~\ref{lemma:lin_convergence} are satisfied. Then, the sequence of residuals $ \| \bbx^t - \bbx^*\|$ satisfies 
%
$\lim_{t \rightarrow \infty} \frac{ \max_{t\in [T_{m+1}, T_{m+2})}\| \bbx^{t} - \bbx^*\|}{\max_{t \in [T_m, T_{m+1})} \|\bbx^t - \bbx^*\|} = 0$.
%
\end{theorem}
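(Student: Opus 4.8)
The plan is to reduce the claim to a recursion on the per-epoch errors $e_m:=\max_{t\in[T_m,T_{m+1})}\|\bbx^t-\bbx^*\|$ and to prove a bound of the form $e_{m+1}\le\psi_m e_m$ with $\psi_m\to0$; since the ratio in the theorem is exactly $e_{m+1}/e_m$, this suffices. Three ingredients are needed: an error identity for the master iterate, a Dennis--Moré type estimate squeezed out of Lemma~\ref{lem:M_2}, and a decomposition of $\bbz_i^t-\bbx^*$ along the secant directions on which the local Hessian approximations are accurate.

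\textbf{Error identity.} Multiplying the identity of Lemma~\ref{lemm_update} by $\tfrac1n\sum_i\bbB_i^t$, subtracting $\sum_i\nabla f_i(\bbx^*)=0$, and expanding each $\nabla f_i(\bbz_i^t)-\nabla f_i(\bbx^*)$ about $\bbx^*$ via \eqref{eq:result_of_lip_hessian} with $\tbx=\hbx=\bbx^*$, one obtains
\[
\Bigl(\tfrac1n\sum_{i=1}^n\bbB_i^t\Bigr)(\bbx^t-\bbx^*)=\tfrac1n\sum_{i=1}^n\bigl(\bbB_i^t-\nabla^2 f_i(\bbx^*)\bigr)(\bbz_i^t-\bbx^*)-\bbr^t,\qquad \|\bbr^t\|\le\tfrac{\tilde{L}}{n}\sum_{i=1}^n\|\bbz_i^t-\bbx^*\|^2 .
\]
Lemma~\ref{lem:M_2}, iterated from the small initial deviations assumed in Lemma~\ref{lemma:lin_convergence}, keeps every $\bbB_i^t$ in a small $\bbM_i$-ball around $\nabla^2 f_i(\bbx^*)\succeq\mu\bbI$, so $\tfrac1n\sum_i\bbB_i^t$ is uniformly positive definite and $\|\bbx^t-\bbx^*\|\le C\bigl(\sum_i\|(\bbB_i^t-\nabla^2 f_i(\bbx^*))(\bbz_i^t-\bbx^*)\|+\tilde{L}\max_i\|\bbz_i^t-\bbx^*\|^2\bigr)$. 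For $t\in[T_{m+1},T_{m+2})$, Proposition~\ref{pr:delays_recurrence} gives $\bbz_i^t=\bbx^s$ with $s\in[T_m,t)\subset[T_m,T_{m+2})$, so $\|\bbz_i^t-\bbx^*\|\le\max(e_m,e_{m+1})$ and the quadratic term is $O\bigl(\max(e_m,e_{m+1})^2\bigr)=o(1)\cdot\max(e_m,e_{m+1})$. Everything thus reduces to bounding $\sum_i\|(\bbB_i^t-\nabla^2 f_i(\bbx^*))(\bbz_i^t-\bbx^*)\|$ by $o(1)\cdot\max(e_m,e_{m+1})$.

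\textbf{Dennis--Moré estimate.} With $(1-x)^{1/2}\le1-x/2$ and the uniform bound $\bar\phi$ on $\|\bbB_i^t-\nabla^2 f_i(\bbx^*)\|_{\bbM_i}$ (which the same iteration produces), Lemma~\ref{lem:M_2} rearranges to
\[
\tfrac{\alpha}{2}(\theta_i^{t-D_i^t})^2\,\|\bbB_i^{t-D_i^t}-\nabla^2 f_i(\bbx^*)\|_{\bbM_i}\le\|\bbB_i^{t-D_i^t}-\nabla^2 f_i(\bbx^*)\|_{\bbM_i}-\|\bbB_i^{t}-\nabla^2 f_i(\bbx^*)\|_{\bbM_i}+(\alpha_3\bar\phi+\alpha_4)\,\sigma_i^{t-D_i^t}.
\]
Summing this over the successive updates of a fixed worker $i$ telescopes the $\bbM_i$-norm differences, and $\sum\sigma_i<\infty$ over those updates follows from the geometric epoch decay $\|\bbx^t-\bbx^*\|\le r^m\|\bbx^0-\bbx^*\|$ of Lemma~\ref{lemma:lin_convergence} together with Proposition~\ref{pr:delays_recurrence}; hence $\sum(\theta_i)^2\|\bbB_i-\nabla^2 f_i(\bbx^*)\|_{\bbM_i}<\infty$ along worker $i$'s updates, which by the definition of $\theta_i$ and equivalence of norms gives $\|(\bbB_i^t-\nabla^2 f_i(\bbx^*))\bbs_i^t\|/\|\bbs_i^t\|\to0$. (The secant relation $\bbB_i^t\bbs_i^t=\nabla f_i(\bbx^{t-d_i^t})-\nabla f_i(\bbx^{t-D_i^t})$ and \eqref{eq:result_of_lip_hessian} even give the explicit bound $\|(\bbB_i^t-\nabla^2 f_i(\bbx^*))\bbs_i^t\|\le\tilde{L}\,\sigma_i^t\,\|\bbs_i^t\|$.) Let $\eta_m$ be the supremum of $\|(\bbB_i^t-\nabla^2 f_i(\bbx^*))\bbs_i^t\|/\|\bbs_i^t\|$ over all $i$ and all Hessian-approximation versions in force during epochs $\ge m$; then $\eta_m\to0$.

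\textbf{Closing the recursion --- the main obstacle.} Fix $t\in[T_{m+1},T_{m+2})$ and a worker $i$. The iterate $\bbz_i^t$ is one of the two delayed copies held by worker $i$, and $\bbs_i^t$ is the difference of those two copies, so $\bbz_i^t-\bbx^*=\pm\bbs_i^t+(\bbw_i^t-\bbx^*)$ where $\bbw_i^t$ is the other copy; by Proposition~\ref{pr:delays_recurrence} both copies lie in epochs $m$ or $m+1$, hence $\|\bbs_i^t\|\le2\max(e_m,e_{m+1})$ and the secant part $\|(\bbB_i^t-\nabla^2 f_i(\bbx^*))\bbs_i^t\|\le\eta_m\|\bbs_i^t\|$ is already $o(1)\cdot\max(e_m,e_{m+1})$. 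The remaining term $(\bbB_i^t-\nabla^2 f_i(\bbx^*))(\bbw_i^t-\bbx^*)$ is the crux: $\bbw_i^t$ is again a recent iterate and an endpoint of a neighbouring secant, so the splitting is iterated ``forward along worker $i$'s delayed update history'', controlling the mismatch between $\bbB_i^t$ and the earlier/later versions in $\bbM_i$-norm and using Proposition~\ref{pr:delays_recurrence} to keep all indices within $O(1)$ epochs of $m$. Carrying this out so that the residual coefficient actually vanishes (rather than stabilizing at a small constant) requires using the \emph{summability}, not merely the vanishing, of the Dennis--Moré quantities of the previous step --- exactly as in the superlinear analysis of the centralized IQN method \cite{mokhtari2018iqn}, with the delay bookkeeping being the genuinely new element and the hardest part of the proof. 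The outcome is $\|\bbx^t-\bbx^*\|\le\psi_m\max(e_m,e_{m+1})$ for all $t\in[T_{m+1},T_{m+2})$ with $\psi_m\to0$; maximizing over $t$ gives $e_{m+1}\le\psi_m\max(e_m,e_{m+1})$, which, once $\psi_m<1$, forces $\max(e_m,e_{m+1})=e_m$ and hence $e_{m+1}\le\psi_m e_m$, i.e.\ $e_{m+1}/e_m\to0$, which is the claim.
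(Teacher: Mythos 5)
Your overall architecture — error identity for the master iterate, a Dennis--Moré property, then epoch bookkeeping — matches the paper's. Your error identity is exactly the paper's Lemma~\ref{lem:recurrence_to_average} (quoted from Lemma~6 of \cite{mokhtari2018iqn}), and your closing recursion $e_{m+1}\le\psi_m\max(e_m,e_{m+1})\Rightarrow e_{m+1}\le\psi_m e_m$ is logically equivalent to, and arguably cleaner than, the paper's argument via the first maximizer $t_0$ of the epoch. Your derivation of the secant-direction Dennis--Moré property by telescoping Lemma~\ref{lem:M_2} is also sound.

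However, there is a genuine gap at precisely the step you flag as ``the main obstacle,'' and it is not a presentational gap but the mathematical heart of the theorem. What the error identity requires is that $\|(\bbB_i^t-\nabla^2 f_i(\bbx^*))(\bbz_i^t-\bbx^*)\|/\|\bbz_i^t-\bbx^*\|\to 0$, i.e.\ smallness of $\bbB_i^t-\nabla^2 f_i(\bbx^*)$ along the direction \emph{toward the optimum}, whereas your telescoping argument only yields smallness along the \emph{secant} direction $\bbs_i^t$. Your one-step splitting $\bbz_i^t-\bbx^*=\pm\bbs_i^t+(\bbw_i^t-\bbx^*)$ leaves the residual $(\bbB_i^t-\nabla^2 f_i(\bbx^*))(\bbw_i^t-\bbx^*)$, which the available bounds control only by $2\eta\delta\,\|\bbw_i^t-\bbx^*\|$ — a small \emph{constant} times the epoch error, not $o(1)$ times it — so the recursion as sketched stabilizes at $\psi_m\approx 2\eta\delta\,r+o(1)$ rather than tending to zero. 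Iterating the splitting does not obviously repair this, because $\bbB_i^t$ is only known to be accurate on its own secant direction, not on the secants of earlier versions that appear when you unroll $\bbw_i^t$; you assert that ``summability'' of the Dennis--Moré quantities closes the loop but never exhibit the estimate. This is exactly the content of Lemma~5 of \cite{mokhtari2018iqn}, which the paper's proof invokes at this point (after dividing \eqref{eq:recurrence_to_average} by $\tfrac1n\sum_i\|\bbz_i^t-\bbx^*\|$ and dropping all but one summand from each denominator); without either citing that lemma or supplying the delay-aware version of its proof, your argument does not establish superlinear convergence. Everything before and after this step is correct.
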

\vspace{-0.05in}
The result in Theorem \ref{thm:sup_linear_thm} shows that the maximum residual in an  epoch divided by the  the maximum residual  for the previous epoch converges to zero. This observation shows that there exists a subsequence of residuals  $\| \bbx^{t} - \bbx^*\|$ that converges to zero superlinearly.

\vspace{-0.05in}
\section{Experiments}\label{sec:exp}
\vspace{-0.05in}
We conduct our experiments on five datasets (\verb+epsilon+, \verb+SUSY+, \verb+covtype+, \verb+mnist8m+, \verb+cifar10+) from the LIBSVM library \cite{chang2011libsvm}.\footnote{We use all the datasets without any pre-processing except for the smaller-scale covtype dataset, which we enlarged 5 times for bigger scale experiments using the approach in \cite{giant2018mahoney}. 
} For the first three datasets, the objective considered is a binary logistic regression problem 
$f(x) = \frac{1}{n}\sum_{i=1}^n \log(1+\exp(-b_i a_i^T x) +\frac{\lambda}{2}\|x\|^2$ where $a_i \in \mathbb{R}^p$ are the feature vectors and
$b_i \in \{-1, +1\}$ are the labels. The other two datasets are about multi-class classification instead of binary classification. 
For comparison, we used two other algorithms designed for distributed optimization:
\begin{itemize}
	\item Distributed Average Repeated Proximal Gradient (\textbf{DAve-RPG}) \cite{mishchenko2018delay}. It is a recently proposed competitive state-of-the-art asynchronous method for first-order distributed optimization, numerically demonstrated to outperform incremental aggregated gradient methods \cite{gurbuzbalaban2017convergence,vanli2018global} and synchronous proximal gradient methods in \cite{,mishchenko2018delay}.
	\item Distributed Approximate Newton (\textbf{DANE}) \cite{shamir2014dane}. This is a well-known Newton-like method that does not require a parameter server node, but performs reduce operations at every step. 
\end{itemize}
	In the experiments we did not implement algorithms that require shared memory (such as ASAGA \cite{asaga} or Hogwild! \cite{recht2011hogwild}) because in our setting of master/worker communication model, the memory is not shared.   Since the focus of this paper is mainly on asynchronous algorithms where the communication delays is the main bottleneck, for fairness reasons, we are also not comparing our method with some synchronous algorithms such as DISCO \cite{zhang2015disco} that would not support asynchronous computations. Our code is publicly available at \url{https://github.com/DAve-QN/source}.

	 The experiments are conducted on XSEDE Comet CPUs (Intel Xeon E5-2680v3 2.5 GHz). For DAVE-QN and DAVE-RPG we build a cluster of 17 processes in which 16 of the processes are workers and one is the master. 
   The DANE method does not require a master so we use 16 workers for its experiments. We split the data randomly among the processes so that each has the same amount of samples. In our experiments, Intel MKL 11.1.2 and MVAPICH2/2.1 are used for the BLAS (sparse/dense) operations and we use MPI programming compiled with mpicc 14.0.2. Each experiment is repeated thirty times and the average is reported. 
    

	For the methods' parameters the best options provided by the method authors are used. 
	For DAve-RPG the stepsize $\tfrac{1}{L}$ is used where $L$ is found by a standard backtracking line search similar to \cite{schmidt2015non}. DANE has two parameters, $\eta$ and $\mu$. As recommended by the authors, we use $\eta = 1$ and $\mu=3\lambda$. We tuned $\lambda$ to the dataset, choosing $\lambda=1$ for the \verb+mnist8m+ and \verb+cifar10+ datasets, $\lambda=0.001$ for the \verb+epsilon+  and \verb+SUSY+ and $\lambda=0.1$ for the \verb+covtype+.
	Since DANE requires a local optimization problem to be solved, we use SVRG \cite{johnson2013accelerating} as its local solver where its parameters are selected based on the experiments in \cite{shamir2014dane}.
	
	Our results are summarized in Figure \ref{fig:suboptimality} where we report the expected suboptimality versus time in a logarithmic y-axis. For linearly convergent algorithms, the slope of this plot determines the convergence rate. DANE method is the slowest on these datasets, but it does not need a master, therefore it can apply to multi-agent applications \cite{nedic2009distributed} where master nodes are often not available. We observe that Dave-QN performs significantly better on all the datasets except \verb+cifar10+, illustrating the superlinear convergence behavior provided by our theory compared to other methods. For the \verb+cifar10+ dataset, $p$ is the largest. Although Dave-QN starts faster than Dave-RPG , Rave-RPG has a cheaper iteration complexity ($O(p)$ compared to $O(p^2)$ of Dave-QN) and becomes eventually faster. 
\vspace{-0.1in}
\begin{figure} [H]
\centering
\begin{tabular}{cccc}
\includegraphics[width=0.3\textwidth]{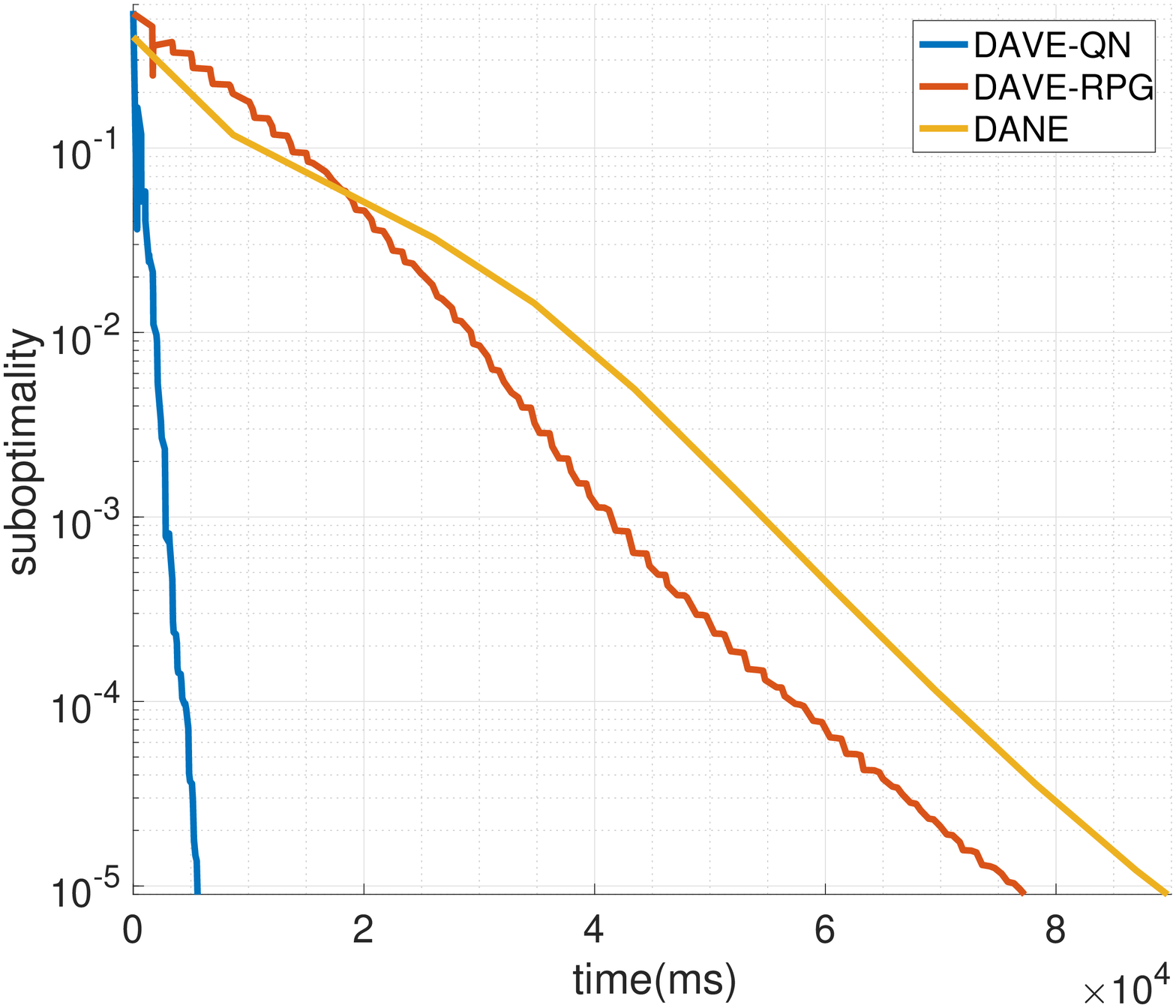} &
\includegraphics[width=0.3\textwidth]{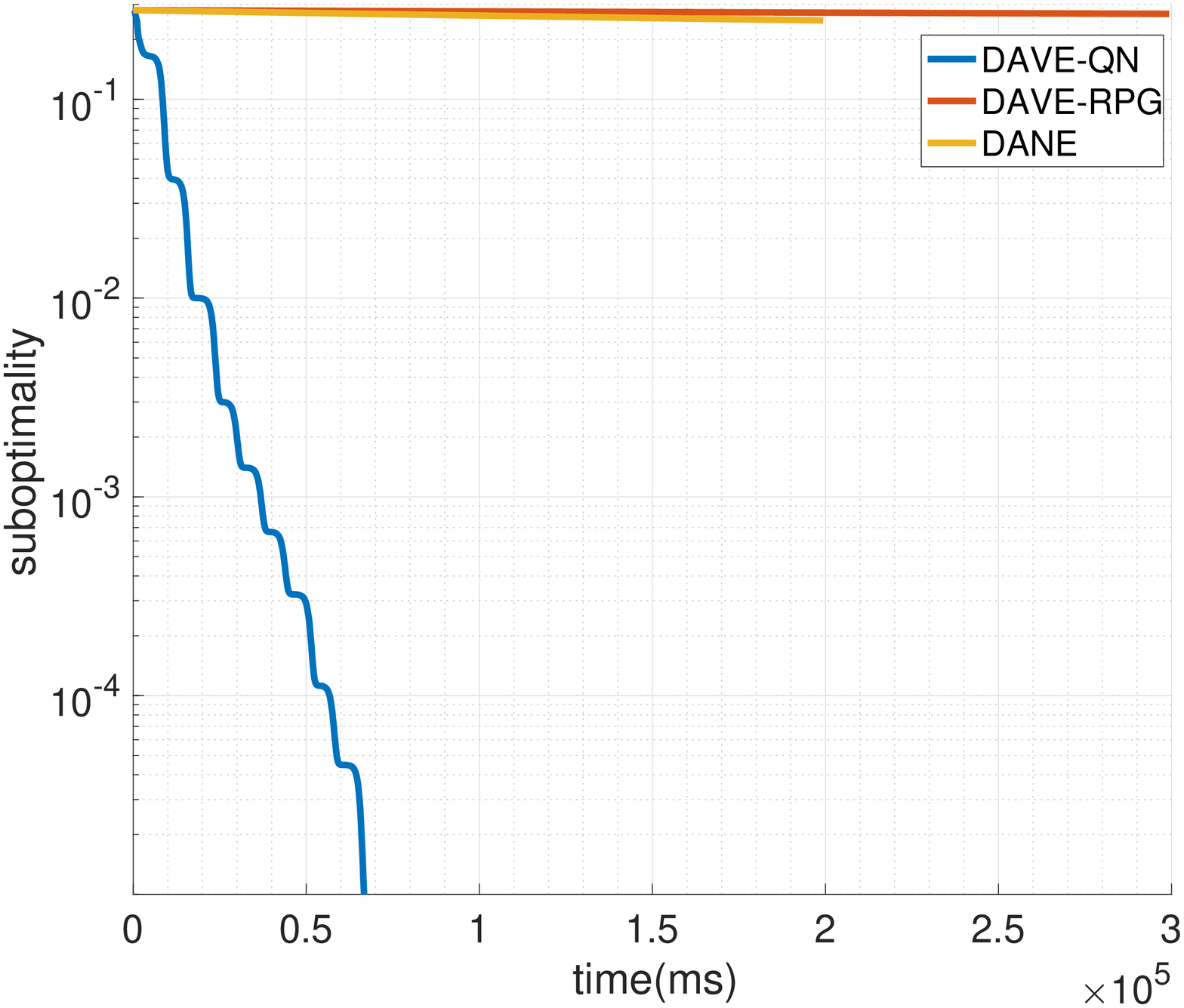} &
\includegraphics[width=0.3\textwidth]{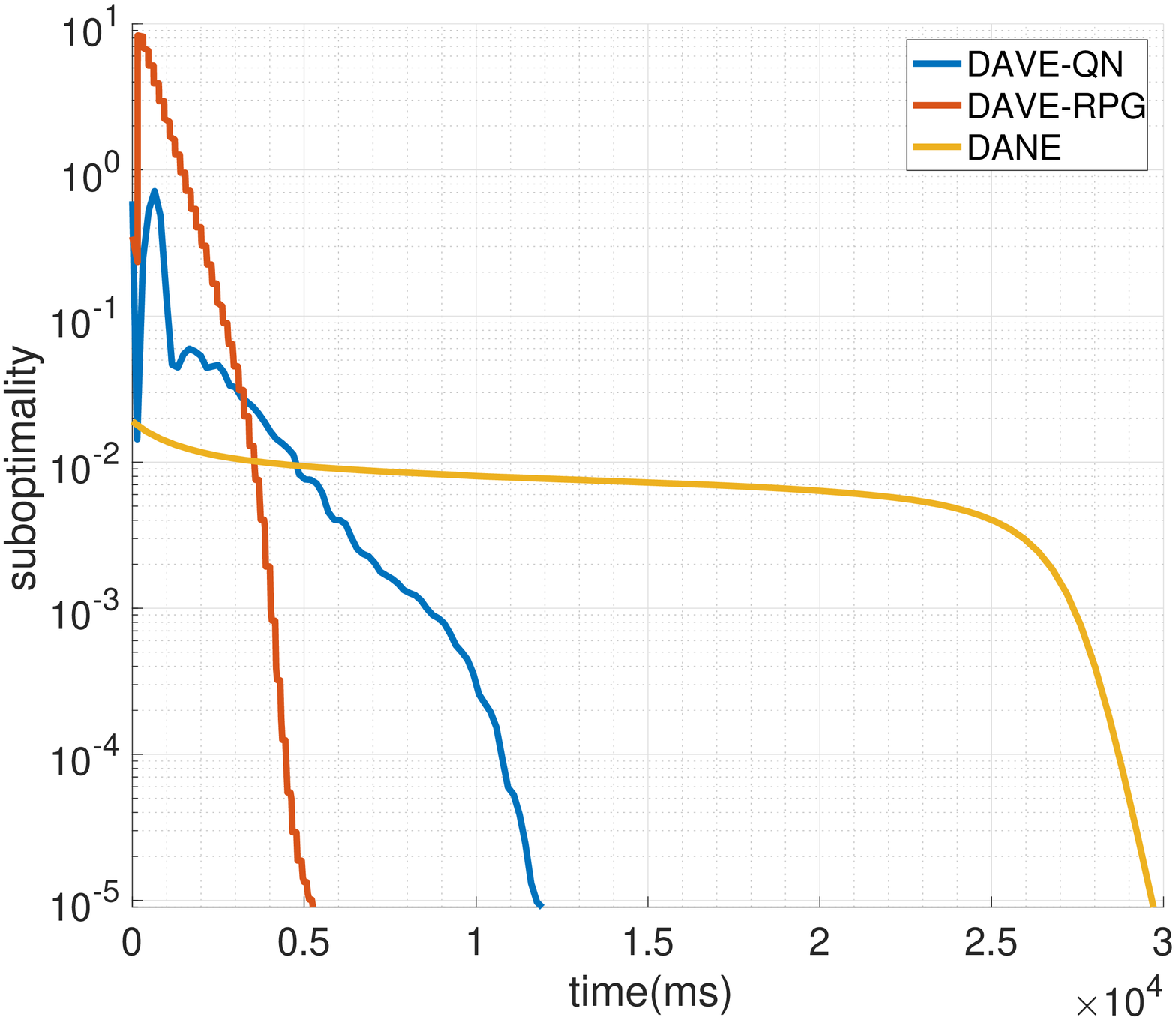} \\
\textbf{mnist8m $(p=784,n=8.1M)$}  & \textbf{epsilon $(p=2000,n=500K)$} & \textbf{cifar10 $(p=3072,n=60K)$}  \\[6pt]
\end{tabular}
\begin{tabular}{cccc}
\includegraphics[width=0.33\textwidth]{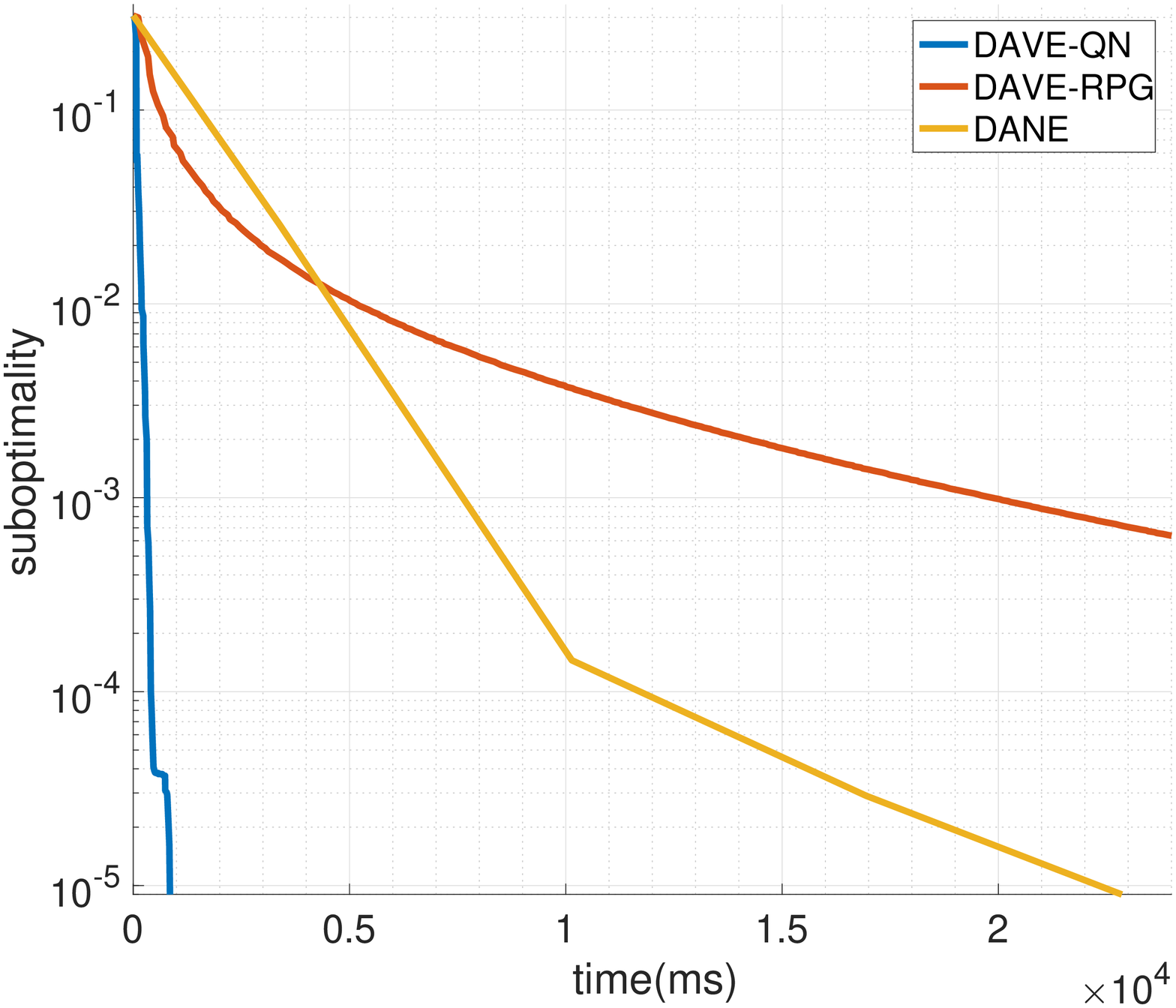} &
\includegraphics[width=0.33\textwidth]{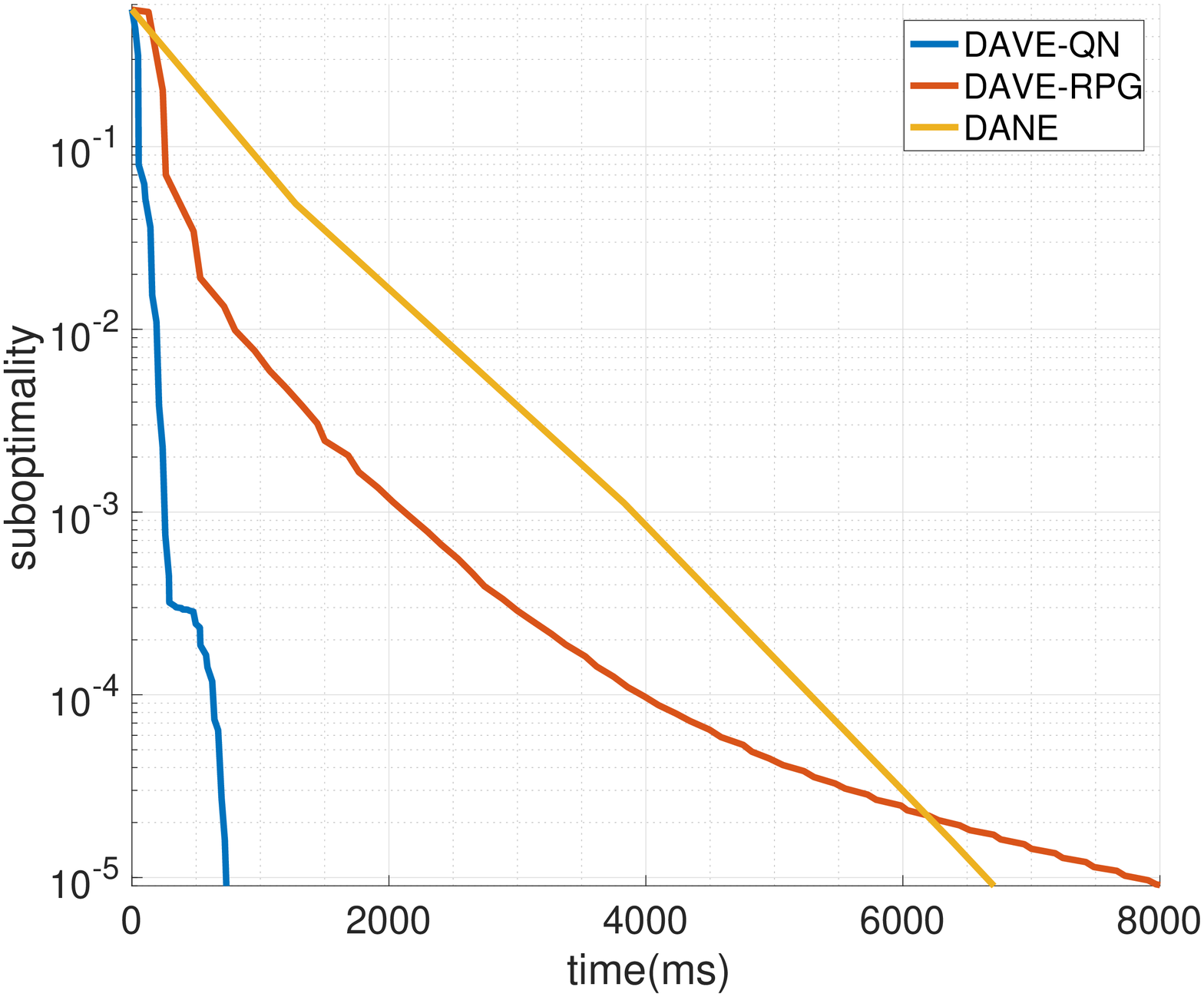} \\
\textbf{SUSY $(p=19,n=5M)$}  & \textbf{covtype $(p=54,n\approx 2.9M)$}  \\[6pt]
\end{tabular}
\caption{Expected suboptimality versus time.}
\label{fig:suboptimality}
\end{figure}

\vspace{-0.2in}
\section{Conclusion and Future Work} 
In this paper, we focused on the problem of minimizing a large-scale empirical risk minimization in a distributed manner. We used an asynchronous architecture which requires no global coordination between the master node and the workers. Unlike distributed first-order methods that follow the gradient direction to update the iterates, we proposed a distributed averaged quasi-Newton (DAve-QN) algorithm that uses a quasi-Newton approximate Hessian of the workers' local objective function to update the decision variable. In contrast to second-order methods that require computation of the local functions Hessians, the proposed DAve-QN only uses gradient information to improve the convergence of first-order methods in ill-conditioned settings. It is worth mentioning that the computational cost of each iteration of DAve-QN is $\mathcal{O}(p^2)$, while the size of the vectors that are communicated between the master and workers is $\mathcal{O}(p)$. Our theoretical results show that the sequence of iterates generated at the master node by following the update of DAve-QN converges superlinearly to the optimal solution when the objective functions at the workers are smooth and strongly convex. Our results hold for both bounded delays and unbounded delays with a bounded time-average. Numerical experiments illustrate the performance of our method.

The choice of the stepsize in the initial stages of the algorithm is the key to get good overall iteration complexity for second-order methods. Investigating several line search techniques developed for BFGS and adapting it to the distributed asynchronous setting is a future research direction of interest. Another promising direction would be developing Newton-like methods that can go beyond superlinear convergence while preserving communication complexity. Finally, investigating the dependence of the convergence properties on the sample size $m_i$ of each machine $i$ would be interesting, in particular one would expect the performance in terms of communication complexity to improve if the sample size of each machine is increased.

\subsubsection*{Acknowledgments}
This work was supported in part by the grants NSF DMS-1723085, NSF CCF-1814888 and NSF CCF-1657175. This work used the Extreme Science and Engineering Discovery Environment (XSEDE) \cite{towns2014xsede}, which is supported by National Science Foundation grant number ACI-1548562.

{
\newpage
\bibliographystyle{unsrt}
\bibliography{main-dave-qn}
}

\newpage
\appendix
\section{Supplementary Material}\label{alg_with_index}

\subsection{The proposed DAve-QN method with exact time indices}
\tcbset{width=1.0\columnwidth,before=,after=, colframe=black,colback=white, fonttitle=\bfseries, coltitle=white, colbacktitle=red!80!yellow, boxrule=0.2mm}
\begin{algorithm}[H]
\caption{DAve-QN with comprehensive notations)}
\label{alg:DQN}
\centering
\begin{multicols}{2}
\begin{tcolorbox}[title=Master:]
\textbf{Initialize} $ \bbx^0$, $\bbB_i^0$, $(\bbB^0)^{-1} = (\sum_{i=1}^n \bbB_i^0)^{-1}$, $\bbu^0 = \sum_{i=1}^n \bbB_i^0 \bbx^0$, $\bbg^0 = \sum_{i=1}^n\nabla f_i(\bbx^0)$\\
\For{$t= 1$ \KwTo T-1}{
    \textbf{If} a worker sends an update:\\
    {\color{blue!70!black} Receive $\Delta \bbu^{t}$, $\bby$, $\bbq$, $\alpha^t$, $\beta^t$ from it}\\
    $\bbu^{t}= \bbu^{t-1} + \Delta \bbu^{t}$ \\
    $\bbg^{t} = \bbg^{t-1} + \bby$\\
    $\bbv^t = (\bbB^{t-1})^{-1} \bby$\\
    $\bbU^{t} = (\bbB^{t-1})^{-1} - \frac{\bbv^t \bbv^{t\top}}{\alpha^t + \bbv{t\top}\bby}$\\
    $\bbw^t = \bbU^{t} \bbq$\\
    $\Binv = \bbU^{t} + \frac{\bbw^t \bbw^{t\top}}{\beta^t - \bbq^T\bbw^t}$\\
    $\bbx^{t} = \Binv (\bbu^{t} - \bbg^{t})$\\
    {\color{red!80!yellow} Send $\bbx^{t}$ to the slave in return}
}
Interrupt all slaves\\
\textbf{Output} $ \bbx^T $
\end{tcolorbox}

\columnbreak
\tcbset{width=1.03\columnwidth,before=, colframe=black!50!black, colbacktitle=blue!70!black}
\begin{tcolorbox}[title=Slave $i$:]
\textbf{Initialize} $\bbx_i^0 = \bbx^0$, $\bbB_i^0$\\
\While{not interrupted by master}{
    {\color{red!80!yellow} Receive $\bbx^{t - D_i^t}$ at moment $t - D_i^t$}\\
    Perform below steps by moment $t - d_i^t$ \\
    $\bbz_i^t=\bbz_i^{t-d_i^t} = \bbx^{t - D_i^t}$\\
    $\bbs_i^t = \bbs_i^{t-d_i^t} = \bbz^{t - d_i^t} - \bbz_i^{t - D_i^{t}}$\\	
    $\bby_i^t=\bby_i^{t-d_i^t} = \nabla f_i(\bbx^{t-d_i^t}) - \nabla f_i(\bbz_i^{t})$\\
    $\bbq_i^t= \bbq_i^{t-d_i^t} = \bbB_i^{t-D_i^t} \bbs_i^{t-d_i^t}$\\
    $\alpha^{t-d_i^t} = \bby_i^{tT} \bbs_i^{t}$\\
    $\beta^{t-d_i^t} = (\bbs_i^{t-d_i^t})^T \bbB_i^{t-D_i^t} \bbs_i^t$\\
    $\bbB_i^{t-d_i^t} = \bbB_i^{t - D_i^t} + \frac{\bby_i^{t} \bby_i^{tT}}{\alpha^{t-d_i^t}} - \frac{\bbq_i^{t} \bbq_i^{tT}}{\beta^{t-d_i^t}}$\\
    $\Delta \bbu^{t-d_i^t} = \bbB_i^{t-d_i^t}\bbz_i^{t-d_i^t}  - \bbB_i^{t-D_i^t}\bbz_i^{t-D_i^t}$\\
    {\color{blue!70!black} Send $\Delta \bbu^{t-d_i^t}, \bby_i^{t-d_i^t}, \bbq_i^{t-d_i^t}, \alpha^{t-d_i^t}, \beta^{t-d_i^t}$ to the master at moment $t-d_i^t$}
}
\end{tcolorbox}
\end{multicols}
\end{algorithm}
\subsection{Proof of Lemma \ref{lemm_update}} 
\begin{proof}
	To verify the claim, we need to show that $\bbu^t = \sum_{i=1}^n \bbB_i^t \bbz_i^t$ and $\bbg^t = \sum_{i=1}^n \nabla f_i(\bbz_i^t)$. They follow from our delayed vectors notation $\bbz_i^t = \bbz_i^{t-d_i^t}$ and how $\Delta \bbu^{t-d_i^t}$ and $\bby_i^{t-d_i^t}$ are computed by the corresponding worker.
\end{proof}
\subsection{Proof of Lemma~\ref{lem:M_2}}
To prove the claim in Lemma~\ref{lem:M_2} we first prove the following intermediate lemma using the result of Lemma 5.2 in \cite{Broyden}.

\begin{lemma} \label{lem:M_1}
Consider the proposed method outlined in Algorithm \ref{alg:DQN}.
Let $\bbM$ be a nonsingular symmetric matrix such that 
\begin{equation}\label{ness_cond}
\|\bbM\bby_i^t-\bbM^{-1}\bbs_i^t\| \leq \beta \|\bbM^{-1}\bbs_i^t\| ,
\end{equation}
for some $\beta \in[0,1/3]$ and vectors $\bbs_i^t$ and $\bby_i^t$ in $\reals^p$ with $\bbs_i^t\neq\bb0$. Let's denote $i$ as the index that has been updated at time $t$. Then, there exist positive constants $\alpha$, $\alpha_1$, and $\alpha_2$ such that, for any symmetric $\bbA\in \reals^{p\times p}$ we have, 
\begin{align}\label{B_cons_rela}
\|\bbB_i^{t}-\bbA\|_\bbM 
	&\leq \left[ (1- \alpha \theta^2)^{1/2} 
	+\alpha_1 \frac{\|\bbM\bby_i^{t-D_i^t}-\bbM^{-1}\bbs_i^{t-D_i^t}\| }{\|\bbM^{-1}\bbs_i^{t - D_i^t}\|} \right] \|\bbB_i^{t - D_i^t}-\bbA\|_\bbM \nonumber\\
    &\quad + \alpha_2 \frac{\|\bby_i^{t - D_i^t}-\bbA\bbs_i^{t - D_i^t}\|}{\|\bbM^{-1}\bbs_i^{t - D_i^t}\|},
\end{align}
where $\alpha= (1-2\beta)/(1-\beta^2)\in [3/8,1]$, $\alpha_1=2.5(1-\beta)^{-1}$, $\alpha_2=2(1+2\sqrt{p})\|\bbM\|_\bbF$, and
\begin{align}
\theta =\frac{\|\bbM(\bbB_i^{t - D_i^t}-\bbA)\bbs_i^{t - D_i^t}\|}{\|\bbB_i^{t - D_i^t}-\bbA\|_\bbM\|\bbM^{-1}\bbs_i^{t - D_i^t}\|} \quad \for \ \bbB_i^{t - D_i^t}\neq \bbA, \qquad  \theta=0\quad  \for\   \bbB_i^{t - D_i^t} = \bbA.
\end{align}
\end{lemma}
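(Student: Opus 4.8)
The plan is to observe that, from worker $i$'s point of view, $\bbB_i^{t}$ is produced from its previous Hessian approximation by exactly one ordinary BFGS update, and then to invoke the classical bounded-deterioration estimate of Broyden, Dennis and Moré (Lemma~5.2 in \cite{Broyden}) on that single update. First I would do the index bookkeeping: by the delay identity \eqref{eq-hessian-delay} we have $\bbB_i^{t}=\bbB_i^{t-d_i^t}$, and the worker rule \eqref{worker_update} (with the comprehensive indices of Algorithm~\ref{alg:DQN}) shows that this matrix is obtained from the penultimate approximation $\bbB_i^{t-D_i^t}$ via
\[
\bbB_i^{t}=\bbB_i^{t-D_i^t}+\frac{\bby\bby^{\top}}{\bby^{\top}\bbs}-\frac{\bbB_i^{t-D_i^t}\bbs\bbs^{\top}\bbB_i^{t-D_i^t}}{\bbs^{\top}\bbB_i^{t-D_i^t}\bbs},
\]
where $\bbs,\bby$ are the variable- and gradient-variation vectors computed by worker $i$ at its most recent completed update, with $\bbq_i=\bbB_i^{t-D_i^t}\bbs$, $\bby^{\top}\bbs=\alpha^{t-d_i^t}$, and $\bbs^{\top}\bbB_i^{t-D_i^t}\bbs=\beta^{t-d_i^t}$. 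The point of this step is that, despite the asynchrony, each worker still carries out a single clean BFGS step between two of its own communications, so no matrix-analytic machinery beyond the classical lemma is needed; only the correct identification of the ``old'' matrix $\bbB_i^{t-D_i^t}$, the ``new'' matrix $\bbB_i^{t}$, and their secant pair is required.

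With this reduction in hand, I would apply the Broyden--Dennis--Moré lemma with the weighting matrix $\bbM$, the arbitrary symmetric target $\bbA$, and the error matrix $\bbE:=\bbB_i^{t-D_i^t}-\bbA$. The hypothesis \eqref{ness_cond}, $\|\bbM\bby-\bbM^{-1}\bbs\|\le\beta\|\bbM^{-1}\bbs\|$ with $\beta\in[0,1/3]$, supplies precisely the near-secant condition the classical result demands, and the nondegeneracy $\bbs\neq\bb0$ guarantees the update is well defined. The lemma then produces an inequality whose leading factor contracts the $\bbM$-weighted error along the direction $\bbs$ by $(1-\alpha\theta^2)^{1/2}$, plus a deterioration term proportional to the secant-residual ratio $\|\bbM\bby-\bbM^{-1}\bbs\|/\|\bbM^{-1}\bbs\|$, plus an additive term controlled by $\|\bby-\bbA\bbs\|/\|\bbM^{-1}\bbs\|$ that measures how far the pair $(\bbs,\bby)$ is from satisfying the secant equation for $\bbA$ itself. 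This is exactly the structure of \eqref{B_cons_rela}, with $\theta$ the normalized alignment of $\bbs$ with $\bbE$ in the $\bbM$-geometry, $\theta=\|\bbM\bbE\bbs\|/(\|\bbE\|_\bbM\|\bbM^{-1}\bbs\|)$, and the degenerate convention $\theta=0$ when $\bbE=\bb0$, i.e.\ $\bbB_i^{t-D_i^t}=\bbA$.

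Finally I would track the constants to confirm the stated values. The contraction coefficient is $\alpha=(1-2\beta)/(1-\beta^2)$; since this is strictly decreasing on $[0,1/3]$ (its derivative has numerator $-2(1-\beta+\beta^2)<0$) it ranges in $[3/8,1]$, attaining $1$ at $\beta=0$ and $3/8$ at $\beta=1/3$. The deterioration constant is $\alpha_1=2.5(1-\beta)^{-1}$ and the additive constant is $\alpha_2=2(1+2\sqrt{p})\|\bbM\|_\bbF$, the factor $\sqrt{p}$ and the Frobenius norm of $\bbM$ entering when one passes between operator and Frobenius norms in the $\bbM$-weighted estimate. The step I expect to demand the most care is the bookkeeping of the first paragraph: one must verify that the scalars $\alpha^{t-d_i^t},\beta^{t-d_i^t}$ and the vector $\bbq_i$ actually computed by worker $i$ coincide with the secant data built from the pair $(\bbs,\bby)$ appearing in \eqref{ness_cond}, and that the matrix being updated is the penultimate $\bbB_i^{t-D_i^t}$ rather than some intermediate one, so that the single-update reduction is exact and the classical lemma applies verbatim.
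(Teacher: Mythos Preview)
Your proposal is correct and follows exactly the paper's own argument: the paper's proof consists solely of the observation that $\bbB_i^{t-1}=\bbB_i^{t-D_i^t}$ (your index bookkeeping) together with a direct appeal to Lemma~5.2 of \cite{Broyden}, which is precisely the Broyden--Dennis--Mor\'e bounded-deterioration estimate you invoke. Your write-up is more detailed than the paper's two-line proof, but the logical route is identical.
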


\begin{proof}
By definition of delays $d_i^t$, the function $f_i$ was updated at step $t - d_i^t$ and $\bbB_i^{t-1}$ is equal to $\bbB_i^{t-D_i^t}$. Considering this observation and the result of Lemma 5.2 in \cite{Broyden}, the claim follows.
\end{proof}

Note that the result in Lemma~\ref{lem:M_1} characterizes an upper bound on the difference between the Hessian approximation matrices $\bbB_i^{t }$ and $\bbB_i^{t - D_i^t}$ and any positive definite matrix $\bbA$. Let us show that matrices $\bbM=\nabla^2f_i(\bbx^*)^{-1/2}$ and $\bbA=\nabla^2f_i(\bbx^*)$ satisfy the conditions of Lemma \ref{lem:M_1}. By strong convexity of $f_i$ we have $\|\nabla^2 f_i(\bbx^*)^{1/2}\bbs_i^{t - D_i^t}\| \ge \sqrt{\mu}\|\bbs_i^{t - D_i^t}\|$. Combined with Assumption \ref{ass:hessian_assumption}, it gives that 
\begin{align}\label{proof10}
 \frac{\| \bby_i^{t - D_i^t}-\nabla^2f_i(\bbx^*)\bbs_i^{t - D_i^t}\|}{\|\nabla^2f_i(\bbx^*)^{1/2}\bbs_i^{t - D_i^t}\|}
\leq \frac{ \tilde{L}\|\bbs_i^{t - D_i^t}\|\max\{\|\bbz_i^{t - D_i^t}-\bbx^*\|,\|\bbz_i^{t}-\bbx^*\|\} }{\sqrt{\mu}\|\bbs_i^{t - D_i^t}\|} = \frac{\tilde{L}}{\sqrt{\mu}} \sigma_i^{t}
\end{align}
This observation implies that the left hand side of the condition in \eqref{ness_cond} for $\bbM=\nabla^2f_i(\bbx^*)^{-1/2}$ is bounded above by 
\begin{align}\label{proof20}
\frac{\|\bbM\bby_i^{t - D_i^t}-\bbM^{-1}\bbs_i^{t - D_i^t}\|}{\|\bbM^{-1}\bbs_i^{t - D_i^t}\|}
\leq
\frac{\| \nabla^2f_i(\bbx^*)^{-1/2}\| \|\bby_i^{t - D_i^t}-\nabla^2f_i(\bbx^*)\bbs_i^{t - D_i^t}\|}{\|\nabla^2f_i(\bbx^*)^{1/2}\bbs_i^{t - D_i^t}\|}
 \leq \frac{\tilde{L}}{\mu} \sigma_i^{t}
\end{align}
Thus, the condition in \eqref{ness_cond} is satisfied since $\tilde{L} \sigma_i^{t}/\mu<1/3.$ Replacing the upper bounds in \eqref{proof10} and \eqref{proof20} into the expression in \eqref{B_cons_rela} implies the claim in \eqref{eq:lemma_M_2_claim} with
\begin{align}\label{proof300}
\beta= \frac{\tilde{L}}{\mu} \sigma_i^{t}, \ 
 \alpha=\frac{1-2\beta}{1-\beta^2}, \ 
\alpha_3=\frac{5\tilde{L}}{2\mu(1-\beta)}, \ 
\alpha_4=    \frac{{2(1+2\sqrt{p}) \tilde{L}}}{\sqrt{\mu}}\|\nabla^2f_i(\bbx^*)^{-\frac{1}{2}}\|_\bbF,
\end{align}
and the proof is complete.

\subsection{Proof of Lemma~\ref{lemma:lin_convergence}}

We first state the following result from Lemma 6 in \cite{mokhtari2018iqn}, which shows an upper bound for the error $\|\bbx^{t}-\bbx^*\|$ in terms of the gap between the delayed variables $\bbz_i^t$ and the optimal solution $\bbx^*$ and the difference between the Newton direction $\nabla^2 f_{i}(\bbx^*)\left(\bbz_i^t-\bbx^*\right)$ and the proposed quasi-Newton direction $\bbB_i^t\left(\bbz_i^t-\bbx^*\right)$.

\begin{lemma} \label{lem:recurrence_to_average}
If Assumptions \ref{ass:gradient_assumption} and \ref{ass:hessian_assumption} hold, then the sequence of iterates generated by Algorithm \ref{alg:DQN} satisfies
\begin{align}\label{eq:recurrence_to_average}
&\|\bbx^{t}-\bbx^*\|
\leq
 \frac{\tilde{L}\Gamma^{t}}{n} \sum_{i=1}^n  \left\|\bbz_i^t-\bbx^* \right\|^2
 +
 \frac{\Gamma^{t}}{n} \sum_{i=1}^n  \left\| \left(\bbB_i^t-\nabla^2 f_{i}(\bbx^*)\right) \left(\bbz_i^t-\bbx^*\right) \right\|, 
\end{align}
where $\Gamma^t:=\|( (1/n)\sum_{i=1}^n\bbB_i^t )^{-1}\|$.
\end{lemma}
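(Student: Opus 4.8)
The plan is to derive \eqref{eq:recurrence_to_average} directly from the explicit master update in Lemma~\ref{lemm_update} combined with the Lipschitz-Hessian estimate \eqref{eq:result_of_lip_hessian}. Write $\bbH^t := \tfrac1n\sum_{i=1}^n \bbB_i^t$, so that $\Gamma^t = \|(\bbH^t)^{-1}\|$ by definition. Lemma~\ref{lemm_update} then reads $\bbH^t \bbx^t = \tfrac1n\sum_{i=1}^n \bbB_i^t \bbz_i^t - \tfrac1n\sum_{i=1}^n \nabla f_i(\bbz_i^t)$. The single structural fact about the optimum that the argument exploits is the first-order optimality condition for $f=\tfrac1n\sum_i f_i$, namely $\tfrac1n\sum_{i=1}^n \nabla f_i(\bbx^*) = 0$.

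First I would form $\bbH^t(\bbx^t - \bbx^*)$ by subtracting $\bbH^t \bbx^* = \tfrac1n\sum_i \bbB_i^t \bbx^*$ from the displayed identity and inserting the vanishing term $\tfrac1n\sum_i \nabla f_i(\bbx^*)$, which yields
\begin{align*}
\bbH^t(\bbx^t - \bbx^*) = \frac1n\sum_{i=1}^n \bbB_i^t(\bbz_i^t - \bbx^*) - \frac1n\sum_{i=1}^n \bigl(\nabla f_i(\bbz_i^t) - \nabla f_i(\bbx^*)\bigr).
\end{align*}
The decisive step is to add and subtract $\nabla^2 f_i(\bbx^*)(\bbz_i^t - \bbx^*)$ inside each summand, splitting the right-hand side into a \emph{curvature-mismatch} part $\tfrac1n\sum_i (\bbB_i^t - \nabla^2 f_i(\bbx^*))(\bbz_i^t - \bbx^*)$ and a \emph{linearization-error} part $\tfrac1n\sum_i [\nabla^2 f_i(\bbx^*)(\bbz_i^t - \bbx^*) - (\nabla f_i(\bbz_i^t) - \nabla f_i(\bbx^*))]$.

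Next I would bound the linearization-error part termwise by applying \eqref{eq:result_of_lip_hessian} with $\tbx = \hbx = \bbx^*$ and $\bbx = \bbz_i^t$; the maximum on its right-hand side collapses to $\|\bbz_i^t - \bbx^*\|$ since $\|\bbx^*-\bbx^*\|=0$, giving the clean quadratic bound $\tilde L \|\bbz_i^t - \bbx^*\|^2$ for each $i$. Finally, writing $\bbx^t - \bbx^* = (\bbH^t)^{-1}\bbH^t(\bbx^t - \bbx^*)$, taking norms, and using $\|(\bbH^t)^{-1}\bbv\| \le \Gamma^t \|\bbv\|$ together with the triangle inequality over the two parts delivers exactly the asserted inequality \eqref{eq:recurrence_to_average}, with the curvature-mismatch sum producing the second term and the quadratic bounds producing the $\tilde L\Gamma^t/n$ term.

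There is no genuine obstacle beyond careful bookkeeping: the result is an exact rewriting of the update followed by one application of \eqref{eq:result_of_lip_hessian}. The only point requiring attention is invoking the optimality identity correctly, so that the gradient terms assemble precisely into the differences $\nabla f_i(\bbz_i^t) - \nabla f_i(\bbx^*)$ that \eqref{eq:result_of_lip_hessian} is designed to control. Notably, the curvature-mismatch part is deliberately left intact here, as its decay is handled separately through Lemma~\ref{lem:M_2}; the present lemma only needs to isolate it as a standalone additive term.
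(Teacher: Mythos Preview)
Your proposal is correct and is essentially the standard argument for this type of bound. The paper itself does not give a proof of this lemma but merely cites it as Lemma~6 in \cite{mokhtari2018iqn}; your derivation---rewriting the update of Lemma~\ref{lemm_update} as $\bbH^t(\bbx^t-\bbx^*)$, inserting the optimality condition, splitting via $\nabla^2 f_i(\bbx^*)(\bbz_i^t-\bbx^*)$, and invoking \eqref{eq:result_of_lip_hessian} with $\tbx=\hbx=\bbx^*$---is exactly the argument behind that cited result, adapted to the delayed iterates $\bbz_i^t$.
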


We use the result in Lemma \ref{lem:recurrence_to_average} to prove the claim of Lemma~\ref{lemma:lin_convergence}. We will prove the claimed convergence rate in Lemma \ref{lemma:lin_convergence}
together with an additional claim
	\begin{align*}
		\left\|\bbB_i^{t}- \nabla^2 f_i(\bbx^*)\right\|_\bbM \le 2\delta
	\end{align*}		
	by inductions on $m$ and on $t\in [T_m, T_{m+1})$. The base case of our induction is $m=0$ and $t=0$, which is the initialization step, so let us start with it.
	
	Since all norms in finite dimensional spaces are equivalent, there  exists a constant $\eta>0$ such that $\|\bbA\|\leq \eta \|\bbA\|_\bbM$ for all $\bbA$. Define $\gamma\coloneqq 1 /\mu$ and $d\coloneqq \max_m (T_{m+1} - T_m)$, and assume that $\eps(r)=\eps$ and $\delta(r)=\delta$ are chosen such that 
\begin{equation} \label{eq:choice_of_delta_eps}
(2\alpha_3\delta+\alpha_4)\frac{d\eps}{1-r}\leq \delta
 \quad \text{and}\quad
 \gamma(1+r)[\tilde{L}\eps+2\eta\delta] \leq r,
\end{equation}
where $\alpha_3$ and $\alpha_4$ are the constants from Lemma \ref{lem:M_2}. As $\|\bbB_i^0-\nabla^2 f_i(\bbx^*)\|_\bbM\leq \delta$, we also have
\[ \|\bbB_i^0-\nabla^2 f_i(\bbx^*)\|\leq \eta \delta.
\]
Therefore, by triangle inequality from $\|\nabla^2 f_i(\bbx^*)\|\leq L$ we obtain $\|\bbB_i^0\|\leq \eta\delta+L$, so $\|(1/n)\sum_{i=1}^n\bbB_i^0\|\leq \eta\delta+L$. The second part of inequality \eqref{eq:choice_of_delta_eps} also implies $2\gamma(1+r)\eta\delta\leq r$. 
Moreover, it holds that $\|\bbB_i^0-\nabla^2 f_i(\bbx^*)\|\leq \eta \delta<2 \eta \delta$ and  by Assumption \ref{ass:gradient_assumption} $\gamma\geq\|\nabla^2 f_i(\bbx^*)^{-1}\|$, so we obtain by Banach Lemma that
\begin{eqnarray*}
	\|(\bbB_i^0)^{-1}\| \leq (1+r)\gamma.
\end{eqnarray*}
We formally prove this result in the following lemma. 

\begin{lemma}
If the Hessian approximation $\bbB_i$ satisfies the inequality $\|\bbB_i - \nabla^2 f_i(\bbx^*)\| \leq 2\eta \delta $ and $\|\nabla^2 f_i(\bbx^*)^{-1}\|\leq \gamma$, then we have $\|\bbB_i^{-1}\| \leq (1+r)\gamma.$
\end{lemma}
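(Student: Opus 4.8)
The plan is to prove this via the Banach perturbation lemma (Neumann series), exactly as one controls the inverse of a quasi-Newton matrix that is close to the true Hessian. First I would factor out the invertible matrix $\nabla^2 f_i(\bbx^*)$ (invertible by $\mu$-strong convexity, Assumption~\ref{ass:gradient_assumption}): write $\bbB_i = \nabla^2 f_i(\bbx^*)\,(\bbI + \bbE)$ with $\bbE := \nabla^2 f_i(\bbx^*)^{-1}\bigl(\bbB_i - \nabla^2 f_i(\bbx^*)\bigr)$. Then the task reduces to bounding $\|(\bbI+\bbE)^{-1}\|$.

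Next I would estimate the perturbation. By submultiplicativity, the hypothesis $\|\bbB_i - \nabla^2 f_i(\bbx^*)\| \le 2\eta\delta$, and $\|\nabla^2 f_i(\bbx^*)^{-1}\| \le \gamma$, we get $\|\bbE\| \le 2\gamma\eta\delta$. The key point is that the second inequality in \eqref{eq:choice_of_delta_eps} was already noted to imply $2\gamma(1+r)\eta\delta \le r$, hence $\|\bbE\| \le 2\gamma\eta\delta \le r/(1+r) < 1$. Since $\|\bbE\| < 1$, the Neumann series $\sum_{k\ge 0}(-\bbE)^k$ converges to $(\bbI+\bbE)^{-1}$ and gives $\|(\bbI+\bbE)^{-1}\| \le (1-\|\bbE\|)^{-1}$; in particular $\bbB_i$ is invertible with $\bbB_i^{-1} = (\bbI+\bbE)^{-1}\nabla^2 f_i(\bbx^*)^{-1}$.

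Finally I would assemble the constant. Combining the two displays yields $\|\bbB_i^{-1}\| \le \gamma / (1 - 2\gamma\eta\delta)$, and from $2\gamma\eta\delta \le r/(1+r)$ we have $1 - 2\gamma\eta\delta \ge 1 - r/(1+r) = 1/(1+r)$, so $\|\bbB_i^{-1}\| \le (1+r)\gamma$, which is the claim.

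There is no real obstacle here; the only thing requiring care is tracking the constant precisely — one must use that the choice of $\eps, \delta$ in \eqref{eq:choice_of_delta_eps} pushes $2\gamma\eta\delta$ below $r/(1+r)$ rather than merely below $1$, since it is exactly this sharper bound that produces the stated factor $(1+r)$ and not some weaker multiple of $\gamma$. (In the write-up one should also recall why $\gamma \ge \|\nabla^2 f_i(\bbx^*)^{-1}\|$ is available, namely $\gamma = 1/\mu$ together with Assumption~\ref{ass:gradient_assumption}.)
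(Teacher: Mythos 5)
Your proof is correct and takes essentially the same route as the paper: both arguments rest on the Banach/Neumann-series perturbation lemma combined with the observation that \eqref{eq:choice_of_delta_eps} forces $2\gamma\eta\delta\le r/(1+r)<1$. The only cosmetic difference is that you factor $\bbB_i=\nabla^2 f_i(\bbx^*)(\bbI+\bbE)$ on one side, whereas the paper conjugates symmetrically by $\nabla^2 f_i(\bbx^*)^{\pm 1/2}$ before applying the Banach lemma; the constants come out identically.
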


\begin{proof}
Note that according to Banach Lemma, if a matrix $\bbA$ satisfies the inequality $\|\bbA-\bbI\| \leq 1$, then it holds $\|\bbA^{-1}\| \leq \frac{1}{1-\|\bbA-\bbI\|}$.

We first show that $\|\nabla^2 f_i(\bbx^*)^{-1/2}  \bbB_i \nabla^2 f_i(\bbx^*)^{-1/2}-\bbI\|\leq 1$. To do so, note that 
\begin{align}
\|\nabla^2 f_i(\bbx^*)^{-1/2}  \bbB_i \nabla^2 f_i(\bbx^*)^{-1/2} - \bbI\|   
&\leq   
\|\nabla^2 f_i(\bbx^*)^{-1/2}\| \|  \bbB_i  - \nabla^2 f_i(\bbx^*)\| \|\nabla^2 f_i(\bbx^*)^{-1/2}\|  \nonumber\\
&\leq    2\eta \delta \gamma   \nonumber\\
&\leq    \frac{r}{r+1}  \nonumber\\
&<1 .
\end{align}
Now using this result and Banach Lemma we can show that 
\begin{align}
\|\nabla^2 f_i(\bbx^*)^{1/2}  \bbB_i^{-1} \nabla^2 f_i(\bbx^*)^{1/2}\| 
&\leq \frac{1}{1-\|\nabla^2 f_i(\bbx^*)^{-1/2}  \bbB_i \nabla^2 f_i(\bbx^*)^{-1/2} - \bbI\|   }\nonumber\\
&\leq \frac{1}{1-\frac{r}{r+1}  }\nonumber\\
&= 1+r
\end{align}
Further, we know that 
\begin{align}
\|\nabla^2 f_i(\bbx^*)^{1/2}  \bbB_i^{-1} \nabla^2 f_i(\bbx^*)^{1/2}\| \geq \frac{\|\bbB_i^{-1} \|}{\gamma}
\end{align}
By combining these results we obtain that 
\begin{equation}
\|\bbB_i^{-1} \| \leq (1+r)\gamma.
\end{equation}
\end{proof}

Similarly, for matrix $((1/n)\sum_{i=1}^n \bbB_i^0)^{-1}$ we get from $\|(1/n)\sum_{i=1}^n \bbB_i^0-(1/n)\sum_{i=1}^n \nabla^2 f_i(\bbx^*)\| 
\leq (1/n)\sum_{i=1}^n\| \bbB_i^0- \nabla^2 f_i(\bbx^*)\| \leq\eta \delta$ and  $\|\nabla^2 f(\bbx^*)^{-1}\|\leq \gamma$ that
\begin{align*}
\left\|\left(\frac{1}{n}\sum_{i=1}^n \bbB_i^0\right)^{-1}\right\|\leq (1+r)\gamma.
\end{align*}
We have by Lemma \ref{lem:M_2} and induction hypothesis
\begin{align*}
\left\|\bbB_i^{t}- \nabla^2 f_i(\bbx^*)\right\|_\bbM - \left\|\bbB_i^{t-D_i^t}- \nabla^2 f_i(\bbx^*)\right\|_\bbM
& \leq 
	\alpha_3 \sigma_i^{t-D_i^t} \left\|\bbB_i^{t-D_i^t}- \nabla^2 f_i(\bbx^*)\right\|_\bbM  + \alpha_4 \sigma_i^{t-D_i^t} \\
	&\le \left(\alpha_3  \left\|\bbB_i^{t-D_i^t}- \nabla^2 f_i(\bbx^*)\right\|_\bbM  + \alpha_4\right)  r^{m-1}\epsilon\\
	&\le \left(2\alpha_3\delta +  \alpha_4\right)  r^{m-1}\epsilon,
\end{align*}
By summing this inequality over all moments in the current epoch when worker $i$ performed its update, we obtain that
\begin{align*}
\left\|\bbB_i^{t}- \nabla^2 f_i(\bbx^*)\right\|_\bbM - \left\|\bbB_i^{T_m - d_i^{T_m}}- \nabla^2 f_i(\bbx^*)\right\|_\bbM
	 \le\left(2\alpha_3\delta +  \alpha_4\right)  dr^{m - 1}\epsilon ,
\end{align*}
Summing the new bound again, but this time over all passed epoch, we obtain
\begin{align*}
\left\|\bbB_i^{t}- \nabla^2 f_i(\bbx^*)\right\|_\bbM - \left\|\bbB_i^{0}- \nabla^2 f_i(\bbx^*)\right\|_\bbM
	\le\left(2\alpha_3\delta +  \alpha_4\right)  d\epsilon \sum_{k=0}^{m-1}r^{k} 
	\le\frac{\left(2\alpha_3\delta +  \alpha_4\right)  d\epsilon}{1-r}\le \delta.
\end{align*}
Therefore, $\left\|\bbB_i^{t}- \nabla^2 f_i(\bbx^*)\right\|_\bbM\leq 2\delta$. By using the Banach argument again, we can show that $\|(\frac{1}{n}\sum_{i=1}^n \bbB_i^t)^{-1}\|\leq (1+r)\gamma$. Using this result, for any $t\in [T_m, T_{m+1})$ we have $z_i^t = x^{t - D_i^t}\in [T_{m-1}, t)$ and we can write
\begin{align}
\|\bbx^{t}-\bbx^*\|& \leq   (1+r)\gamma \left[  \frac{\tilde{L}}{n} \sum_{i=1}^n  \left\|\bbz_i^t-\bbx^* \right\|^2+  \frac{1}{n} \sum_{i=1}^n  \left\| \left[\bbB_i^t-\nabla^2 f_{i}(\bbx^*)\right] \left(\bbz_i^t-\bbx^*\right) \right\|  \right]\nonumber\\
&\leq (1+r)\gamma\left[ {\tilde{L}} \eps +2\eta\delta \right] 
\max_i \|\bbz_i^t - \bbx^*\|\nonumber\\
&\leq r\max_i \|\bbz_i^t - \bbx^*\| \nonumber \\
&\leq r^m\|\bbx^0 - \bbx^*\| .
\end{align}

\subsection{Proof of Theorem \ref{thm:sup_linear_thm}}

	Dividing both sides of \eqref{eq:recurrence_to_average} by $(1/n) \sum_{i=1}^n  \left\|\bbz_i^t-\bbx^* \right\|$, we get
\begin{equation}\label{eq:lemma_claim_200}
\frac{\|\bbx^{t}-\bbx^*\|}{ \frac{1}{n}\sum_{i=1}^n  \left\|\bbz_i^t-\bbx^* \right\|}
\leq{\tilde{L}\Gamma^{t}} \sum_{i=1}^n \frac{  \left\|\bbz_i^t-\bbx^* \right\|^2}{ \sum_{i=1}^n  \left\|\bbz_i^t-\bbx^* \right\|}
 +
 {\Gamma^{t}}\sum_{i=1}^n \frac{  \left\| \left(\bbB_i^t-\nabla^2 f_{i}(\bbx^*)\right) \left(\bbz_i^t-\bbx^*\right) \right\|}{ \sum_{i=1}^n  \left\|\bbz_i^t-\bbx^* \right\|}
\end{equation}
As every term in $\sum_{i=1}^n  \|\bbz_i^t-\bbx^* \|$ is non-negative, the upper bound in \eqref{eq:lemma_claim_200} will remain valid if we keep only one summand out of the whole sum in the denominators of the right-hand side, so
\begin{align}
 \frac{\|\bbx^{t}-\bbx^*\|}{ \frac{1}{n}\sum_{i=1}^n  \left\|\bbz_i^t-\bbx^* \right\|}
& \leq
 {\tilde{L}\Gamma^{t}} \sum_{i=1}^n \frac{  \left\|\bbz_i^t-\bbx^* \right\|^2}{   \left\|\bbz_i^t-\bbx^* \right\|}
 +
 {\Gamma^{t}} \sum_{i=1}^n \frac{  \left\| \left(\bbB_i^t-\nabla^2 f_{i}(\bbx^*)\right) \left(\bbz_i^t-\bbx^*\right) \right\|}{  \left\|\bbz_i^t-\bbx^* \right\|}
  \nonumber\\
  &
  =
  {\tilde{L}\Gamma^{t}} \sum_{i=1}^n {  \left\|\bbz_i^t-\bbx^* \right\|}
   +
 {\Gamma^{t}} \sum_{i=1}^n \frac{  \left\| \left(\bbB_i^t-\nabla^2 f_{i}(\bbx^*)\right) \left(\bbz_i^t-\bbx^*\right) \right\|}{  \left\|\bbz_i^t-\bbx^* \right\|}. 	\label{eq:recurrence_for_limit}
\end{align}

Now using the result in Lemma 5 of \cite{mokhtari2018iqn}, the second sum in \eqref{eq:recurrence_for_limit} converges to zero. Further, $\Gamma^t$ is bounded above by a positive constant. Hence, by computing the limit of both sides in \eqref{eq:recurrence_for_limit} we obtain
\begin{align*}
\lim_{t\to\infty}\frac{\|\bbx^{t}-\bbx^*\|}{ \frac{1}{n}\sum_{i=1}^n  \left\|\bbz_i^t-\bbx^* \right\|}  = 0.
\end{align*}
Therefore, if $T$ is big enough, for $t>T$ we have
\begin{align}\label{eq:recurrence_with_pure_average}
	\|\bbx^{t}-\bbx^*\| \le \frac{1}{n}\sum_{i=1}^n  \left\|\bbz_i^t-\bbx^* \right\| 
    = \frac{1}{n}\sum_{i=1}^n  \left\|\bbx_i^{t-D_i^t}-\bbx^* \right\|
    \le \max_i \left\|\bbx_i^{t-D_i^t}-\bbx^* \right\|.
\end{align}
Now, let $t_0=t_0(m)\coloneqq \min\{\tilde t\in [T_{m+1}, T_{m+2}): \|\bbx^{\tilde t} - \bbx^*\| = \max_{t \in [T_{m+1}, T_{m+2})} \|\bbx^t - \bbx^*\| \}$. In other words, $t_0$ is the first moment in epoch $m+1$ attaining the maximal distance from $x^*$. Then, for all $t\in [T_{m+1}, t_0)$ we have $\|\bbx^t - \bbx^*\| < \|\bbx^{t_0} - \bbx^*\|$. Furthermore, from equation \eqref{eq:recurrence_with_pure_average} and the fact that, according to Proposition~\ref{pr:delays_recurrence}, $t_0 - D_i^{t_0}\in [T_m, t_0)$ we get
\begin{align*}
	\max_{t\in [T_{m+1}, T_{m+2})}\| \bbx^{t} - \bbx^*\| = \| \bbx^{t_0} - \bbx^*\| \le \max_i\left\|\bbx_i^{t_0-D_i^{t_0}}-\bbx^* \right\| 
    \le \max_{t\in [T_{m}, t_0)}\| \bbx^{t} - \bbx^*\|.
\end{align*}
Note that it can not happen that $\max_{t\in [T_{m}, t_0)}\| \bbx^{t} - \bbx^*\| = \max_{t\in [T_{m+1}, t_0)}\| \bbx^{t} - \bbx^*\|$ as that would mean that there exists a $\hat t\in [T_{m+1}, t_0)$ such that $\|\bbx^{\hat t} - \bbx^*\| \ge \|\bbx^{t_0} - \bbx^*\|$, which we made impossible when defining $t_0$. Then, the only option is that in fact
\begin{align*}
	\max_{t\in [T_{m}, t_0)}\| \bbx^{t} - \bbx^*\|
    =\max_{t\in [T_{m}, T_{m+1})}\| \bbx^{t} - \bbx^*\|.
\end{align*}
Finally,
\begin{align*}
	\lim_{t \rightarrow \infty} \frac{ \max_{t\in [T_{m+1}, T_{m+2})}\| \bbx^{t} - \bbx^*\|}{\max_{t \in [T_m, T_{m+1})} \|\bbx^t - \bbx^*\|} 
    &= \lim_{t \rightarrow \infty} \frac{\| \bbx^{t_0(m)} - \bbx^*\|}{\max_{t \in [T_m, T_{m+1})} \|\bbx^t - \bbx^*\|} \\
    &= \lim_{t \rightarrow \infty} \frac{\| \bbx^{t_0(m)} - \bbx^*\|}{\max_{t \in [T_m, t_0(m))} \|\bbx^t - \bbx^*\|} \\
    &\le \lim_{t \rightarrow \infty} \frac{\| \bbx^{t_0(m)} - \bbx^*\|}{\max_i \|\bbx^{t_0(m) - D_i^{t_0(m)}} - \bbx^*\|}\\
    &\le \lim_{t \rightarrow \infty} \frac{\| \bbx^{t_0(m)} - \bbx^*\|}{\frac{1}{n}\sum_{i=1}^n \|\bbz_i^{t_0(m)} - \bbx^*\|} = 0,
\end{align*}
where at the last step we used again the fact that $\bbz_i^t = \bbx^{t - D_i^t}$.

\end{document}